\newtheorem{theorem}{Theorem}[section]
\newtheorem{lemma}[theorem]{Lemma}
\newtheorem{proposition}[theorem]{Proposition}
\renewcommand{\(}{\left(}
\renewcommand{\)}{\right)}
\def\tint{{\textstyle\int}}
\def\C{\mathbf{C}}
\def\E{\mathbf{E}}
\def\P{\mathbf{P}}
\DeclareMathOperator{\SL}{SL}
\DeclareMathOperator{\HS}{HS}
\DeclareMathOperator{\tr}{tr}
\def\HS{\textup{HS}}
\numberwithin{equation}{section}
\begin{document}

\begin{frontmatter}[classification=text]


\author[eberhard]{Sean Eberhard}

\begin{abstract}
We prove the following one-sided product-mixing theorem for the alternating group: Given subsets $X,Y,Z \subset A_n$ of densities $\alpha,\beta,\gamma$ satisfying
\[
  \min(\alpha\beta,\alpha\gamma,\beta\gamma)\gg n^{-1}(\log n)^7,
\]
there are at least
\[
  (1+o(1))\alpha\beta\gamma |A_n|^2
\]
solutions to $xy=z$ with $x\in X, y\in Y, z\in Z$. One consequence is that the largest product-free subset of $A_n$ has density at most $n^{-1/2}(\log n)^{7/2}$, which is best possible up to logarithms and improves the best previous bound of $n^{-1/3}$ due to Gowers. The main tools are a Fourier-analytic reduction noted by Ellis and Green to a problem just about the standard representation, a Brascamp--Lieb-type inequality for the symmetric group due to Carlen, Lieb, and Loss, and a concentration of measure result for rearrangements of inner products.
\end{abstract}
\end{frontmatter}








\section{Introduction}

Product mixing for a group $G$ generally refers to any estimate of the following form: whenever subsets $X,Y,Z\subset G$ have densities $\alpha,\beta,\gamma$ above some threshold, the number of solutions to $xy=z$ with $x\in X,y\in Y,z\in Z$ is $(1+o(1))\alpha\beta\gamma|G|^2$. The following foundational theorem proved by Gowers~\cite{gowers} (and expanded by Babai, Nikolov, and Pyber~\cite{babainikolovpyber}) explains this idea further.

\begin{theorem}[Gowers]\label{gowers}
Let $G$ be a group and let $m$ be the minimal dimension of a nontrivial representation of $G$. Let $X,Y,Z\subset G$ have densities $\alpha,\beta,\gamma$, respectively. Then
\[
 \left| \langle 1_X*1_Y,1_Z\rangle - \alpha\beta\gamma \right| < m^{-1/2} \alpha^{1/2} \beta^{1/2} \gamma^{1/2}.
\]
In particular if $\alpha\beta\gamma \gg m^{-1}$ then
\[
  \langle 1_X*1_Y,1_Z\rangle = (1+o(1))\alpha\beta\gamma.
\]
\end{theorem}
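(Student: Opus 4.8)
The plan is to run the standard non-commutative Fourier argument for quasirandom groups. We may assume $G$ is finite, and equip it with normalized counting measure. For each irreducible unitary representation $\rho$ of $G$, write $d_\rho$ for its dimension and $\hat f(\rho)=\frac1{|G|}\sum_{x\in G} f(x)\rho(x)$ for the Fourier coefficient of $f\colon G\to\C$; then $\widehat{f*g}=\hat f\,\hat g$ and the Plancherel identity reads $\langle f,g\rangle=\sum_\rho d_\rho\tr\bigl(\hat f(\rho)\hat g(\rho)^*\bigr)$, the sum being over all irreducibles. Applying this with $f=1_X*1_Y$ and $g=1_Z$,
\[
  \langle 1_X*1_Y,1_Z\rangle=\sum_\rho d_\rho\,\tr\bigl(\hat{1_X}(\rho)\hat{1_Y}(\rho)\hat{1_Z}(\rho)^*\bigr),
\]
and the trivial representation $\rho_0$ (with $d_{\rho_0}=1$ and $\hat{1_X}(\rho_0)=\alpha$, etc.) contributes exactly $\alpha\beta\gamma$. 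So it suffices to bound the sum over $\rho\neq\rho_0$.

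First I would record the elementary inequality $|\tr(ABC^*)|\le\|A\|_{\mathrm{op}}\|B\|_{\HS}\|C\|_{\HS}$, which follows by writing $\tr(ABC^*)=\langle AB,C\rangle_{\HS}$, applying Cauchy--Schwarz for the Hilbert--Schmidt inner product, and using $\|AB\|_{\HS}\le\|A\|_{\mathrm{op}}\|B\|_{\HS}$. Applying it to each term of the sum over $\rho\neq\rho_0$ and then factoring out the largest operator norm,
\[
  \bigl|\langle 1_X*1_Y,1_Z\rangle-\alpha\beta\gamma\bigr|
  \;\le\;\Bigl(\max_{\rho\neq\rho_0}\|\hat{1_X}(\rho)\|_{\mathrm{op}}\Bigr)\sum_{\rho\neq\rho_0}d_\rho\,\|\hat{1_Y}(\rho)\|_{\HS}\|\hat{1_Z}(\rho)\|_{\HS}.
\]

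Next I would bound the two factors. For the operator norm: if $\rho\neq\rho_0$ then $d_\rho\ge m$, while Plancherel applied to $1_X$ gives $\sum_\rho d_\rho\|\hat{1_X}(\rho)\|_{\HS}^2=\|1_X\|_2^2=\alpha$, so $\|\hat{1_X}(\rho)\|_{\mathrm{op}}^2\le\|\hat{1_X}(\rho)\|_{\HS}^2\le\alpha/d_\rho\le\alpha/m$. For the sum, Cauchy--Schwarz over $\rho$ together with Plancherel for $1_Y$ and $1_Z$ give
\[
  \sum_{\rho\neq\rho_0}d_\rho\,\|\hat{1_Y}(\rho)\|_{\HS}\|\hat{1_Z}(\rho)\|_{\HS}
  \;\le\;\Bigl(\sum_\rho d_\rho\|\hat{1_Y}(\rho)\|_{\HS}^2\Bigr)^{1/2}\Bigl(\sum_\rho d_\rho\|\hat{1_Z}(\rho)\|_{\HS}^2\Bigr)^{1/2}=\beta^{1/2}\gamma^{1/2}.
\]
Multiplying gives $\bigl|\langle 1_X*1_Y,1_Z\rangle-\alpha\beta\gamma\bigr|\le m^{-1/2}\alpha^{1/2}\beta^{1/2}\gamma^{1/2}$, from which the ``in particular'' statement is immediate since $m^{-1/2}(\alpha\beta\gamma)^{1/2}=o(\alpha\beta\gamma)$ once $\alpha\beta\gamma\gg m^{-1}$. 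The strict inequality is recovered by noting that the Plancherel sums for $1_Y$ and $1_Z$ actually omit the trivial-representation terms $\beta^2$ and $\gamma^2$, which sharpens the bound to $m^{-1/2}\alpha^{1/2}(\beta-\beta^2)^{1/2}(\gamma-\gamma^2)^{1/2}$ whenever $Y$ and $Z$ are proper nonempty subsets; the handful of degenerate cases can be checked by hand.

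There is no real obstacle here: every step is routine, and the only inputs are Plancherel's theorem, two lines of matrix-norm manipulation, and the defining property of $m$, namely that every nontrivial irreducible representation of $G$ has dimension at least $m$. That last fact is the whole point of quasirandomness and is exactly where the hypothesis enters; the only thing demanding any care is keeping the Fourier and Plancherel normalizations consistent throughout.
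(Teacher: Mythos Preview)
Your argument is correct and follows essentially the same route as the paper: expand $\langle 1_X*1_Y,1_Z\rangle$ via Parseval and the convolution rule, isolate the trivial-representation term $\alpha\beta\gamma$, then bound each nontrivial summand by $\|\hat{1_X}(\rho)\|\,\|\hat{1_Y}(\rho)\|_\HS\,\|\hat{1_Z}(\rho)\|_\HS$, pull out the worst $\hat{1_X}$-factor using $d_\rho\ge m$ and a single Parseval term, and Cauchy--Schwarz the remaining sum. The only cosmetic difference is that you pass through the operator norm $\|\hat{1_X}(\rho)\|_{\mathrm{op}}$ before bounding it by $\|\hat{1_X}(\rho)\|_\HS$, whereas the paper invokes the submultiplicativity $\|RS\|_\HS\le\|R\|_\HS\|S\|_\HS$ directly; these are the same step unwound.
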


Here and throughout this paper we write $X\lesssim Y$ to mean that $X\leq O(Y)$, and we write $X\ll Y$ to mean that $X\leq o(Y)$. We will write $X\sim Y$ to mean $X\lesssim Y$ and $X\gtrsim Y$. This differs from the standard convention in analytic number theory, but it will be convenient for us.

There are several immediate corollaries of Theorem~\ref{gowers}. For example, if $\alpha\beta\gamma\geq m^{-1}$, then the intersection $XY\cap Z$ is nonempty, and in fact $XYZ^{-1} = G$. In particular, if $X\subset G$ is product-free (meaning that there are no solutions to $xy=z$ with $x,y,z\in X$), then $X$ has density at most $m^{-1/3}$.

For the purpose of illustration let us assume $\alpha\sim\beta\sim\gamma$. Then Theorem~\ref{gowers} asserts that there is a product-mixing phenomenon for sets of density greater than $m^{-1/3}$. On the other hand Kedlaya~\cite{kedlaya1} proved that any group $G$ acting transitively on a set of size $n$ has a product-free subset of density $n^{-1/2}$. For a broad class of groups, including for example the alternating groups and special linear groups, we have $m\sim n$, so for these groups this leaves a gap between $m^{-1/3}$ and $m^{-1/2}$.

In Section~\ref{examples} we partly explain this gap by showing that any group $G$ acting transitively on a set of size $n$ has a subset $X$ of density $\sim n^{-1/3}$ for which there are significantly \emph{more} than the expected number of solutions to $xy=z$. In groups with $m\sim n$ this shows that the density threshold for product mixing is $m^{-1/3}$, as in Gowers's theorem. 

Our main purpose, however, is to demonstrate that a one-sided product-mixing phenomenon persists in the alternating group $A_n$ for somewhat lower densities. Specifically we prove the following theorem.

\begin{theorem}\label{main}
If $X,Y,Z\subset A_n$ have densities $\alpha,\beta,\gamma$, respectively, and
\[
  \min(\alpha\beta,\alpha\gamma,\beta\gamma) \gg \frac{(\log n)^7}{n},
\]
then
\[
 \langle 1_X*1_Y,1_Z\rangle \geq (1+o(1)) \alpha\beta\gamma.
\]
\end{theorem}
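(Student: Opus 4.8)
The plan is to follow the Fourier-analytic route sketched in the abstract: reduce the problem to a statement about the standard representation of $S_n$, and then prove that statement using concentration of measure together with the Carlen--Lieb--Loss Brascamp--Lieb inequality. First I would expand $\langle 1_X*1_Y,1_Z\rangle - \alpha\beta\gamma$ in terms of irreducible characters. The quantity $\langle 1_X*1_Y,1_Z\rangle$ is a sum over nontrivial irreducibles $\rho$ of traces of products $\widehat{1_X}(\rho)\widehat{1_Y}(\rho)\widehat{1_Z}(\rho)^*$, each weighted by $\dim\rho/|A_n|$. The trivial representation contributes exactly $\alpha\beta\gamma$. The point of the Ellis--Green reduction is that among the nontrivial irreducibles, the standard representation (and its dual, which for $A_n$ coincides) is the ``bottleneck'': all other nontrivial irreducibles of $A_n$ have dimension $\gtrsim n^2$, so by the same argument as in Gowers's theorem their total contribution is $O(n^{-1}\alpha^{1/2}\beta^{1/2}\gamma^{1/2})$, which is $o(\alpha\beta\gamma)$ precisely under a hypothesis of the shape $\min(\alpha\beta,\alpha\gamma,\beta\gamma)\gg 1/n$ (even without the logarithmic factors). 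So the whole difficulty is concentrated in controlling the single term coming from the standard representation $\rho_{\mathrm{std}}$, of dimension $n-1$.

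Next I would set up the standard-representation term concretely. Realize $\rho_{\mathrm{std}}$ inside $\R^n$ as the permutation action restricted to the hyperplane $\sum x_i = 0$, or more conveniently work with vectors $u^\sigma \in \R^n$ recording where $\sigma$ sends things, so that the relevant Fourier coefficients become averages over $X$, $Y$, $Z$ of rank-one-ish objects. Because the target inequality is \emph{one-sided} ($\geq$, not $\approx$), I only need a lower bound on the real part of the standard-representation trace, i.e.\ I need to rule out a large \emph{negative} contribution; I do not need to show the term is genuinely small. This is where the probabilistic content enters: one wants to say that for a random $x \in X$, $y \in Y$, $z \in Z$, the inner product structure governing $\langle \rho_{\mathrm{std}}(x)\rho_{\mathrm{std}}(y)\,\xi,\rho_{\mathrm{std}}(z)\xi\rangle$-type quantities concentrates. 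The natural statement is a ``concentration of measure for rearrangements of inner products'': if $a,b\in\R^n$ are fixed unit-ish vectors and $\pi$ is a uniformly random permutation, then $\langle a, \pi b\rangle$ is concentrated around $0$ at scale $n^{-1/2}$; and one needs a robust, multi-set version of this where $\pi$ ranges over a dense subset rather than all of $S_n$, which is exactly where the density hypothesis $\alpha\beta\ \gg (\log n)^7/n$ must be strong enough to beat the fluctuations. The Carlen--Lieb--Loss inequality is the tool that converts an $L^2$/entropy bound on the product structure into the needed control; concretely it bounds $\int_{S_n}\prod_{i} f_i(\sigma(i))^{2}\,d\sigma$ (or a similar symmetric-group Brascamp--Lieb quantity) by a product of $L^2$ norms with the sharp constant, and this is what lets one pass from ``the sets are dense'' to ``the averaged rank-one Fourier data behaves like Gaussian noise.''

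Concretely, the steps in order would be: (1) character expansion of $\langle 1_X*1_Y,1_Z\rangle$, isolating the trivial term $\alpha\beta\gamma$; (2) bound the sum over all nontrivial irreducibles \emph{except} $\rho_{\mathrm{std}}$ by $O\!\left(n^{-1}\sqrt{\alpha\beta\gamma}\right)$ using $\dim\rho\gtrsim n^2$ for those and Cauchy--Schwarz/Parseval, and observe this is $o(\alpha\beta\gamma)$ under the hypothesis; (3) rewrite the remaining $\rho_{\mathrm{std}}$-term as an expectation over $x\sim X,y\sim Y,z\sim Z$ of an explicit bilinear/trilinear form in the permutation vectors; (4) using the concentration-of-measure-for-rearrangements result together with the Carlen--Lieb--Loss inequality, show that this expectation is $\geq -o(\alpha\beta\gamma)$, i.e.\ that no configuration of dense $X,Y,Z$ can conspire to make the standard-representation contribution substantially negative; (5) collect the estimates. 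I expect step (4) to be the main obstacle: it is precisely here that the logarithmic losses appear (the $(\log n)^7$ is presumably the cost of a union bound over a net, or of converting an $L^2$ concentration estimate to a high-probability one), and it is the only step that genuinely uses structure of $A_n$ beyond the dimensions of its representations. Everything else is the standard Gowers/Babai--Nikolov--Pyber machinery plus bookkeeping; the new input is the quantitative, set-robust rearrangement inequality feeding into Carlen--Lieb--Loss.
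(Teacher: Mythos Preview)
Your outline through step (2) is correct and matches the paper. The gap is in steps (3)--(4), where the description is too vague to be a plan and the intuitions about the mechanism are off in two specific ways. Concretely, the standard-representation term is not handled as a single trilinear form in ``permutation vectors''; rather, one rewrites it as $\sum_{i\in\Omega}\langle f\ast p_i g,\,p_i h\rangle$, where $p_i g(\omega)=n\int_{S_n} g(\pi)1_{\pi(i)=\omega}$ is the pushforward of $g=1_Y$ under $\pi\mapsto\pi(i)$. For each fixed $i$ one then needs a one-sided lower bound on $\langle f\ast g_1,g_2\rangle-\alpha\beta\gamma$ for $[0,1]$-valued functions $g_1=p_ig$, $g_2=p_ih$ on $\Omega$. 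A Bernstein-type inequality for Hoeffding's statistic (provable from Carlen--Lieb--Loss, or from Stein's method) plus a dyadic decomposition of $g_1-\beta$ and $g_2-\gamma$ gives a bound with two pieces: a variance piece $\alpha\|g_1-\beta\|_2\|g_2-\gamma\|_2\,n^{-1/2}\log n$ and an \emph{entropy} piece of order $\alpha^{1/2}\beta^{1/2}\gamma^{1/2}(\beta^{1/2}+\gamma^{1/2})S(g_1)^{1/2}S(g_2)^{1/2}n^{-1/2}(\log n)^{5/2}$. The one-sidedness is not generic: it comes from the fact that when dyadic pieces of $g_1-\beta$ and $g_2-\gamma$ have the \emph{same} sign, the trivial bound $\langle f\ast h_1,h_2\rangle\geq 0$ already controls the negative contribution; only opposite-sign pairings are dangerous, and those are governed by the entropies $S(g_1),S(g_2)$ via level-set estimates.

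The second point you are missing is how Carlen--Lieb--Loss actually enters. After Cauchy--Schwarz, the variance pieces sum over $i$ using a Parseval remnant $\sum_i\|p_ig-\beta\|_2^2\lesssim\|g\|_2^2=\beta$. But the entropy pieces require $\sum_{i\in\Omega} S(p_i g)\lesssim S(g)=\log(1/\beta)$, and \emph{this} inequality---entropy subadditivity for pushforwards on $S_n$---is exactly what Carlen--Lieb--Loss yields. It is not applied directly to the trilinear form, and the $(\log n)^7$ does not come from a union bound over a net: it is the product of the $(\log n)^2$ dyadic pairs, the $(\log n)^{1/2}$ from Bernstein, and the $\log(1/\beta)\log(1/\gamma)\lesssim(\log n)^2$ from the entropies of indicator functions. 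Without the entropy term in the per-$i$ concentration bound and without the CLL-based subadditivity to sum it, your step (4) would stall at the Gowers threshold $n^{-1/3}$.
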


As a corollary we deduce that if $X$ has density $\gg n^{-1/2}(\log n)^{7/2}$ then $X^2$ has density
\[
  1 - O(n^{-1/2}(\log n)^{7/2}).
\]
In particular if $X$ is product-free then $X$ has density at most $O(n^{-1/2}(\log n)^{7/2})$. This is best possible up to the logarithmic factors.

As to the methods, we first use nonabelian Fourier analysis to reduce to a problem taking place only in the standard representation, an idea due to Ellis and Green. This problem is then interpretted in terms of random rearrangements of inner products, and we tackle this problem using concentration of measure and entropy subadditivity. The backbone of our proof is a Brascamp--Lieb-type inequality for the symmetric group due to Carlen, Lieb, and Loss, which we explain in Section~\ref{sec:cll}.

\emph{Notation.} As already mentioned, in addition to the usual asymptotic notation $O(\cdot)$ and $o(\cdot)$, we write $X\lesssim Y$ to mean that $X\leq O(Y)$, and we write $X\ll Y$ to mean that $X\leq o(Y)$. We write $X\sim Y$ to mean $X\lesssim Y$ and $X\gtrsim Y$.

We write $\Omega$ throughout for the ground set $\{1,\dots,n\}$ on which $S_n$ and $A_n$ act. We attach the uniform measures to $S_n$, $A_n$, and $\Omega$, and we write an unadorned integral $\tint f$ to mean the integral with respect to the uniform measure on the domain of $f$. We also define inner products, $L^p$ norms, and convolutions accordingly.

\emph{Acknowledgments.} I thank David Ellis for pointing out several confusing typos and recommending a few improvements. 

\section{Examples of sets with poor product mixing}\label{examples}

In this section we give two concrete examples of fairly dense sets with poor product-mixing properties. The first example, a relatively large product-free set, is due to Kedlaya~\cite{kedlaya1} and independently Edward Crane (Ben Green, personal communication), but we recall the construction here as it shows that Theorem~\ref{main} is best possible up to logarithms. The second construction is original, and shows that Theorem~\ref{gowers} is best possible for two-sided mixing.

\subsection{Sets with no solutions to $xy=z$}

First we give an example of a set $X$ of density $\sim n^{-1/2}$ with no solutions to $xy=z$. Fix a set $T\subset\Omega$ of size $t$ and a point $1$ not in $T$ and let $X$ be the set of all $\pi\in A_n$ such that $\pi(1)\in T$ and such that $\pi(T)\subset T^c$. Then clearly $X^2$ is disjoint from $X$, as every $\pi\in X^2$ satisfies $\pi(1)\in T^c$, and it is straightforward to see that $X$ has density
\[
  \frac1{n!} t \binom{n-t}{t} t! (n-t-1)! = \frac{t (n-t)! (n-t-1)!}{n! (n-2t)!} = \frac{t}{n} e^{O(t^2/n)}.
\]
Thus if $t\sim n^{1/2}$ then $X$ has density $\sim n^{-1/2}$. This example is due to Kedlaya~\cite{kedlaya1} and independently Edward Crane (Ben Green, personal communication), and it shows that Theorem~\ref{main} is best possible up to logarithms.

As explained in Kedlaya~\cite{kedlaya1}, his construction adapts straightforwardly to any $2$-transitive subgroup $G\leq S_n$, and in fact it adapts to any transitive subgroup $G\leq S_n$ through an averaging argument.

\begin{proposition}[Kedlaya~\cite{kedlaya1}]\label{prop:ked}
Let $G$ be a transitive subgroup of $S_n$. Then there is a subset $X\subset G$ of density $\sim n^{-1/2}$ such that $X^2 \cap X = \emptyset$.
\end{proposition}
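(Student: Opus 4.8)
The plan is to adapt the construction of the previous subsection by averaging over the choice of ``certificate''. For a point $a\in\Omega$ and a set $T\subseteq\Omega\setminus\{a\}$ of size $|T|=t$, put
\[
  X_{a,T}=\{\pi\in G:\ \pi(a)\in T,\ \pi(T)\cap T=\emptyset\}.
\]
Exactly as before one checks $X_{a,T}^2\cap X_{a,T}=\emptyset$: if $\sigma,\tau\in X_{a,T}$ then $\tau(a)\in T$ while $\sigma(T)\cap T=\emptyset$, so $(\sigma\tau)(a)=\sigma(\tau(a))\notin T$ and hence $\sigma\tau\notin X_{a,T}$. Also, since $\pi\mapsto\pi(a)$ is equidistributed on $\Omega$ by transitivity, the set $\{\pi\in G:\pi(a)\in T\}$ has density exactly $t/n$, so $X_{a,T}$ has density at most $t/n$. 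Fixing $t=\lfloor\epsilon n^{1/2}\rfloor$ for a suitably small absolute constant $\epsilon>0$, every $X_{a,T}$ therefore has density $\lesssim n^{-1/2}$, and it remains to exhibit one of density $\gtrsim n^{-1/2}$.

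For this I would average over $(a,T)$, choosing $a\in\Omega$ uniformly and then $T$ a uniform $t$-subset of $\Omega\setminus\{a\}$, and compute by Fubini
\[
  \E_{a,T}\frac{|X_{a,T}|}{|G|}=\E_{\pi\in G}\,p(\pi),\qquad p(\pi):=\P_{a,T}\big[\pi(a)\in T,\ \pi(T)\cap T=\emptyset\big].
\]
The key input is Burnside's lemma: transitivity gives $\E_{\pi\in G}|\mathrm{Fix}(\pi)|=1$, so Markov's inequality shows that all but an $O(n^{-1/2})$-fraction of $\pi\in G$ have fewer than $\epsilon n^{1/2}$ fixed points. For any such $\pi$ I claim $p(\pi)\gtrsim n^{-1/2}$. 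Indeed, with probability $(n-|\mathrm{Fix}(\pi)|)/n\ge\tfrac12$ the random point $a$ has $\pi(a)\ne a$; conditionally on such an $a$ the point $\pi(a)$ lies in $\Omega\setminus\{a\}$, so $\P[\pi(a)\in T]=t/(n-1)$; and conditionally on $\pi(a)\in T$ the event $\pi(T)\cap T=\emptyset$ says precisely that $T$ is an independent set in the functional digraph of $\pi$ (a disjoint union of cycles). Now $T$ avoids all fewer than $\epsilon n^{1/2}$ fixed points of $\pi$ with probability $\ge(1-\epsilon n^{1/2}/n)^{t}\gtrsim 1$, and the only other obstructions are the at most $n$ ``edges'' $b\mapsto\pi(b)$ lying in cycles of length $\ge 2$, each of which is contained in $T$ with probability $O((t/n)^2)=O(1/n)$; a first moment estimate leaves an expected $O(\epsilon^2)$ violations, so $\P[\pi(T)\cap T=\emptyset\mid\pi(a)\in T]\gtrsim 1$ for $\epsilon$ small enough. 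Multiplying the three factors gives $p(\pi)\gtrsim t/n\gtrsim n^{-1/2}$ — this is the same birthday-type count that produced the factor $e^{O(t^2/n)}$ in the explicit $S_n$ computation above.

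Putting this together, $\E_{\pi\in G}p(\pi)\gtrsim n^{-1/2}$ (the bad $\pi$ contribute negligibly), so some pair $(a,T)$ has $|X_{a,T}|\gtrsim n^{-1/2}|G|$; combined with the density upper bound from the first paragraph this $X_{a,T}$ has density $\sim n^{-1/2}$ and satisfies $X_{a,T}^2\cap X_{a,T}=\emptyset$. The one genuinely delicate step is the estimate $\P[\pi(T)\cap T=\emptyset\mid\pi(a)\in T]\gtrsim 1$: after conditioning on $T$ being disjoint from $\mathrm{Fix}(\pi)$, the remaining bad events are indexed by $O(n)$ two-element obstructions, so a union bound (or a Janson-type inequality, for sharper constants) with $t$ a small multiple of $n^{1/2}$ suffices. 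Everything else is routine, and the value of $\epsilon$ affects only the implied constant in $\sim n^{-1/2}$.
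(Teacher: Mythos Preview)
The paper does not actually include a proof of this proposition; it cites Kedlaya and only remarks that the explicit $A_n$ construction ``adapts straightforwardly to any $2$-transitive subgroup $G\leq S_n$, and in fact it adapts to any transitive subgroup $G\leq S_n$ through an averaging argument.'' Your proposal is precisely that averaging argument---randomize the basepoint $a$ and the set $T$, use Burnside/Cauchy--Frobenius plus Markov to discard permutations with too many fixed points, and then do a first-moment/union-bound count on the edges of the functional digraph of $\pi$---and it is correct. The one place worth tightening in a final write-up is the conditional estimate $\P[\pi(T)\cap T=\emptyset\mid \pi(a)\in T]\gtrsim 1$: once you condition on $\pi(a)\in T$, the edge $(\pi(a),\pi^2(a))$ contributes probability $O(t/n)$ rather than $O((t/n)^2)$, but this is still $o(1)$ and is absorbed by your union bound with $t=\lfloor\eps n^{1/2}\rfloor$.
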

%

\subsection{Sets with too many solutions to $xy=z$}

Next we give an example of a set $X$ of density $\alpha\sim n^{-1/3}$ having many more than the expected number of solutions (namely, $\alpha^3 n!^2$) to $xy=z$. Fix a set $T$ of size $t$ and let $X$ be the set of all $\pi\in A_n$ such that $\pi(T) \cap T$ is nonempty. As long as $t = o(n^{1/2})$ then $X$ has density roughly $t^2/n$, and if you choose $\pi_1,\pi_2$ randomly from $X$ then $\pi_1\pi_2$ is again in $X$ with probability of order $t^2/n + 1/t$. To see this it may help to notice that $X$ is symmetric, and that $\pi_1^{-1} \pi_2 \in X$ if and only if $\pi_1(T) \cap \pi_2(T) \neq \emptyset$. Each of $\pi_1(T)$ and $\pi_2(T)$ is required to intersect $T$ nontrivially, so $\pi_1(T)$ and $\pi_2(T)$ intersect with probability at least $1/t$. Aside from that restriction $\pi_1(T)$ and $\pi_2(T)$ are just random sets of size $t$, so they intersect with probability at least $t^2/n$. (We can afford to be somewhat lax with this computation as we will shortly prove a more general proposition.) Note that the probability $t^2/n + 1/t$ is much larger than the expected probability $t^2/n$ whenever $t$ is small compared to $n^{1/3}$. 

As with the previous construction, this construction adapts straightforwardly to any $2$-transitive subgroup $G\leq S_n$, and to an arbitrary transitive subgroup $G\leq S_n$ through an averaging argument.

\begin{proposition}\label{prop:many}
Let $G$ be a transitive subgroup of $S_n$. Then there is a subset $X\subset G$ of density $\alpha \sim n^{-1/3}$ for which there are at least $100\alpha^3|G|^2$ solutions to $xy=z$ with $x,y,z\in X$.
\end{proposition}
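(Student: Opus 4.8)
The plan is to realize the construction sketched just before the statement in a way that works uniformly over all transitive $G \le S_n$, isolating the single step where a genuine averaging argument is required. Fix a parameter $t$, eventually $t = \lfloor (n/100)^{1/3}\rfloor$, and for a $t$-element subset $T \subset \Omega$ let $X = X_T = \{\pi \in G : \pi(T) \cap T \neq \emptyset\}$, of density $\alpha = \alpha(T)$. Since the number of triples $(x,y,z)\in X^3$ with $xy = z$ equals $|G|^2 \langle 1_X * 1_X, 1_X\rangle$, it suffices to establish two facts about $X_T$: first that $\alpha \le t^2/n$ for every $T$, which is immediate from $\alpha \le \E_\pi|\pi(T)\cap T| = |T|^2/n$ (for each fixed $\omega$ the point $\pi^{-1}(\omega)$ is uniform on $\Omega$, by transitivity); and second the correlation bound
\[
  \langle 1_X * 1_X, 1_X \rangle \ \ge\ \frac{\alpha^2}{t},
\]
again valid for every transitive $G$ and every $T$. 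Granting these, the choice $100 t^3 \le n$ forces $\alpha \le t^2/n \le 1/(100 t)$, hence $\langle 1_X * 1_X, 1_X\rangle \ge \alpha^2/t \ge 100\alpha^3$; and $\alpha \le t^2/n \lesssim n^{-1/3}$.

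To prove the correlation bound I would parametrize the triples with $xy = z$ by $(x,y,z) = (\pi_1^{-1}\pi_2,\ \pi_2^{-1}\pi_3,\ \pi_1^{-1}\pi_3)$, which is an $|G|$-to-one map $G^3 \to \{(x,y,z):xy=z\}$ under which the condition $x,y,z\in X$ becomes: the translates $\pi_1(T), \pi_2(T), \pi_3(T)$ pairwise intersect. Thus $\langle 1_X*1_X,1_X\rangle$ equals the probability of this event for independent uniform $\pi_1,\pi_2,\pi_3$. Conditioning on $\pi_2$ and writing $S = \pi_2(T)$, on the event $A_i = \{\pi_i(T)\cap S\neq\emptyset\}$ let $a_i = \min(\pi_i(T)\cap S)$, for $i = 1,3$. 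Since $\pi_1$ and $\pi_3$ are conditionally i.i.d., and $\P[A_i\mid\pi_2] = \alpha$ for every value of $\pi_2$ (again by transitivity),
\[
  \P\bigl[A_1,\, A_3,\, a_1 = a_3 \,\big|\, \pi_2\bigr] \ =\ \sum_{s\in S} q_s^2, \qquad q_s := \P[A_1,\ a_1 = s \mid \pi_2], \qquad \sum_{s\in S} q_s = \alpha,
\]
so Cauchy--Schwarz gives $\sum_{s\in S}q_s^2 \ge \alpha^2/|S| = \alpha^2/t$. On $A_1\cap A_3\cap\{a_1 = a_3\}$ the three translates share the point $a_1$, hence pairwise intersect; averaging over $\pi_2$ yields the bound.

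It remains to choose $T$ with $\alpha(T) \gtrsim n^{-1/3}$ as well, so that $\alpha(T) \sim t^2/n \sim n^{-1/3}$ and the previous paragraphs apply to it; this is the one place where transitivity, as opposed to $2$-transitivity, makes the analysis delicate, and it is handled by averaging over $T$. Taking $T$ a uniformly random $t$-subset and $\pi$ a uniformly random element, and putting $f = |\pi(T)\cap T|$, one has $\E_{T,\pi} f = t^2/n$ exactly, and a short estimate of $\E_{T,\pi} f^2$ --- splitting the four-point sum $\sum_{\omega,\omega'} \P[\{\omega,\omega',\pi^{-1}\omega,\pi^{-1}\omega'\}\subseteq T]$ by how many of the four points coincide, and using $\E_\pi|\mathrm{Fix}(\pi)| = 1$ --- gives $\E_{T,\pi} f^2 \lesssim t^2/n + t^4/n^2 \lesssim t^2/n$ once $t \lesssim n^{1/2}$. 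Then $\E_T\,\alpha(T) = \P_{T,\pi}[f\ge 1] \ge (\E_{T,\pi} f)^2 / \E_{T,\pi} f^2 \gtrsim t^2/n$ by Cauchy--Schwarz, so some $T$ has $\alpha(T) \gtrsim t^2/n$, as needed. The main obstacle is the correlation bound: the intuitive explanation of why the three translates correlate --- conditioning on two of the pairwise intersections should force the third with probability of order $1/t$ --- implicitly treats the overlap point as roughly uniform in $S$, which can fail badly without $2$-transitivity, and the least-common-element-plus-Cauchy--Schwarz device is exactly what rescues the estimate using only transitivity.
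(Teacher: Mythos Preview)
Your proof is correct and takes a genuinely different route from the paper. The paper lower-bounds both $|X_T|$ and the solution count $N_T$ by inclusion--exclusion: it counts chains $i<j$ in $T$ with $g(i)=j$ (for $|X_T|$) and $i<j<k$ in $T$ with $g_1(i)=j$, $g_2(j)=k$ (for $N_T$), then averages the second-order error terms over $T$ and applies Markov to locate a single $T$ for which both errors are small simultaneously. Your argument instead proves the clean correlation inequality $\langle 1_{X_T}*1_{X_T},1_{X_T}\rangle \ge \alpha(T)^2/t$ \emph{uniformly for every} $T$, via the reparametrisation $(x,y,z)=(\pi_1^{-1}\pi_2,\pi_2^{-1}\pi_3,\pi_1^{-1}\pi_3)$ together with the least-common-element Cauchy--Schwarz trick; averaging over $T$ is then required only for the density lower bound. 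This cleanly separates the ``many solutions'' phenomenon (which holds for every $T$ and every transitive $G$) from the ``correct density'' requirement (which needs a well-chosen $T$), whereas the paper's approach is more computational but tracks the error terms in both estimates explicitly.

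One small point in your second-moment estimate for $\E_{T,\pi}f^2$: the case $\omega\neq\omega'$ with both points fixed by $\pi$ contributes $\E_\pi|\mathrm{Fix}(\pi)|^2\cdot(t/n)^2$, and controlling this needs $\E_\pi|\mathrm{Fix}(\pi)|^2\le n$ rather than merely $\E_\pi|\mathrm{Fix}(\pi)|=1$. This is true---$\E_\pi|\mathrm{Fix}(\pi)|^2$ is the number of $G$-orbits on $\Omega^2$, which for a transitive action equals the number of suborbits and so is at most $n$---but it is worth stating explicitly, since for regular actions (say $G=\Z/n\Z$) the identity alone contributes $n$ to this expectation.
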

\begin{proof}
For $T\subset \Omega$ of size $t$ let $X_T$ be the set of all $g\in G$ for which $g(T)\cap T\neq\emptyset$. Also fix an arbitrary total order $<$ on $\Omega$. Clearly $|X_T|/|G| \leq t^2/n$. We will bound $|X_T|$ below by the number of $g\in G$ for which there are $i,j\in T$ with $i<j$ such that $g(i)=j$. Thus by inclusion-exclusion we have
\begin{align*}
  \frac{|X_T|}{|G|}
  &\geq \sum_{\substack{i,j\in T\\i<j}} \frac{|\{g:g(i) = j\}|}{|G|} - \sum_{\substack{i,j,i',j'\in T \\ i<j,i'<j' \\ (i,j) \neq (i',j')}} \frac{|\{g:g(i)=j,g(i')=j'\}|}{|G|}.
\end{align*}
The first sum here is $\sim t^2/n$ by transitivity, for any $T$. The second sum can be rewritten as
\begin{equation}\label{secondsum}
  \sum_{\substack{i,i'\in\Omega \\ i \neq i'}} \frac1{|G|} \sum_{\substack{g\in G\\g(i)>i,g(i')>i'}} 1_{i\in T} 1_{g(i)\in T} 1_{i'\in T} 1_{g(i')\in T}.
\end{equation}
Now note that for any fixed $i,i'\in\Omega$ such that $i\neq i'$ and for any $g$ satisfying $g(i)>i$ and $g(i')>i'$ we have $|\{i,g(i),i',g(i')\}|\geq 3$, and in fact $|\{i,g(i),i',g(i')\}|=4$ except for a proportion at most $O(1/n)$ of $g\in G$. It follows that the average of~\eqref{secondsum} over $T\subset\Omega$ is bounded by
\[
  O(n^2(t/n)^4 + n(t/n)^3) = O(t^4/n^2).
\]
Thus, by Markov's inequality, \eqref{secondsum} is $O(t^4/n^2)$ with probability at least $9/10$.

Similarly let us count solutions to $xy=z$ in $X_T$. We will bound the number $N_T$ of solutions below by the number of pairs $(g_1,g_2)\in G^2$ for which there exists $i,j,k\in T$ with $i<j<k$ such that $g_1(i)=j$ and $g_2(j)=k$. Thus by inclusion-exclusion again we have
\begin{align*}
  \frac{N_T}{|G|^2}
  &\geq \sum_{\substack{i,j,k\in T\\ i<j<k}} \frac{|\{(g_1,g_2)\in G^2: g_1(i)=j,g_2(j)=k\}|}{|G|^2}\\
  &\, - \sum_{\substack{i,j,k,i',j',k'\in T\\ i<j<k,i'<j'<k'\\ (i,j,k)\neq(i',j',k')}} \frac{|\{(g_1,g_2)\in G^2: g_1(i)=j, g_2(j)=k, g_1(i')=j', g_2(j')=k'\}|}{|G|^2}.
\end{align*}
The first sum is $\sim t^3/n^2$ by transitivity. The second sum can be rewritten
\begin{equation}\label{secondsum2}
  \sum_{\substack{j,j'\in\Omega\\j\neq j'}} \frac1{|G|^2} \sum_{\substack{g_1,g_2\in G\\g_1^{-1}(j)<j<g_2(j)\\g_1^{-1}(j')<j'<g_2(j')}} 1_{g_1^{-1}(j)\in T} 1_{j\in T} 1_{g_2(j)\in T} 1_{g_1^{-1}(j')\in T} 1_{j'\in T} 1_{g_2(j')\in T}.
\end{equation}
To bound this we again average over $T\subset\Omega$. For $j\neq j'$ and $g_1,g_2$ under the stated restrictions the set
\[
  S = \{g_1^{-1}(j),j,g_2(j),g_1^{-1}(j'),j',g_2(j')\}
\]
always has size at least $4$, has size $4$ for at most a proportion $O(1/n^2)$ of $(g_1,g_2)\in G^2$, has size $5$ for at most a proportion $O(1/n)$ of $(g_1,g_2)\in G^2$, and otherwise has size $6$. It follows that the average of~\eqref{secondsum2} over $T\subset\Omega$ is bounded by
\[
  O(n^2(t/n)^6 + n(t/n)^5 + (t/n)^4) = O(t^6/n^4).
\]
Thus, by Markov's inequality, \eqref{secondsum2} is $O(t^6/n^4)$ with probability at least $9/10$.

We deduce that there is some $T$ for which \eqref{secondsum} is $O(t^4/n^2)$ and \eqref{secondsum2} is $O(t^6/n^4)$. For this $T$ it follows that
\[
  \frac{|X_T|}{|G|} \sim t^2/n + O(t^4/n^2)
\]
and that
\[
  \frac{N_T}{|G|^2} \gtrsim t^3/n^2 + O(t^6/n^4).
\]
Thus as long as $t=o(n^{1/2})$ we see that $X_T$ has density $\alpha \sim t^2/n$ while there are at least $(t^3/n^2)|G|^2 \sim (n/t^3) \alpha^3 |G|^2$ solutions to $xy=z$ in $X$. Now take $t=\lfloor c n^{1/3}\rfloor$ for a sufficiently small constant $c$.
\end{proof}

\section{Nonabelian Fourier analysis}

Here we briefly recall the fundamentals of nonabelian Fourier analysis, and then we give a short Fourier-analytic proof of Theorem~\ref{gowers}. This proof seems to be well known among experts: see for example Wigderson~\cite[Chapter 2.11]{wigderson}.

Let $G$ be a compact group endowed with the uniform measure. The Fourier transform of a function $f\in L^2(G)$ at an irreducible unitary representation $\xi:G\to U(d_\xi)$ is defined by
\[
  \hat{f}(\xi) = \int_G f(x) \xi(x).
\]
We then have the inversion formula
\[
  f(x) = \sum_\xi d_\xi \langle\hat{f}(\xi), \xi(x)\rangle_\textup{HS},
\]
and Parseval's identity
\begin{equation}\label{parseval}
  \langle f,g\rangle = \sum_\xi d_\xi \langle\hat{f}(\xi),\hat{g}(\xi)\rangle_\textup{HS}.
\end{equation}
Here the sums are taken over a complete set of representatives of the irreducible representations of $G$ up to equivalency, and the Hilbert--Schmidt inner product $\langle\cdot,\cdot\rangle_\textup{HS}$ is defined by
\[
  \langle R,S\rangle_\textup{HS} = \tr(RS^*).
\]
Like classical Fourier analysis, nonabelian Fourier analysis is a powerful tool for understanding the behaviour of convolutions. Here the convolution $f*g$ of two functions $f,g\in L^2(G)$ is defined by
\begin{equation}\label{convolution-definition}
  f*g(x) = \int_G f(y) g(y^{-1}x),
\end{equation}
and by an application of Fubini's theorem we have the rule
\begin{equation}\label{convolutionrule}
  \widehat{f*g}(\xi) = \hat{f}(\xi)\hat{g}(\xi).
\end{equation}
For all this and more the reader might refer to Tao~\cite[\textsection 2.8]{tao}.

We can now give a short proof of Theorem~\ref{gowers}.

\begin{proof}[Proof of Theorem~\ref{gowers}]
Suppose that $G$ is finite, that $d_\xi\geq m$ for $\xi\neq 1$, and that $X,Y,Z\subset G$ have densities $\alpha,\beta,\gamma$, respectively. Let $f=1_X, g=1_Y, h=1_Z$. Then by the convolution rule~\eqref{convolutionrule} and Parseval~\eqref{parseval} we have
\begin{align*}
  \langle f*g,h\rangle
  &= \sum_\xi d_\xi \langle \hat{f}(\xi)\hat{g}(\xi),\hat{h}(\xi)\rangle_\textup{HS}\\
  &= \alpha\beta\gamma + \sum_{\xi\neq 1} d_\xi \langle\hat{f}(\xi)\hat{g}(\xi),\hat{h}(\xi)\rangle_\textup{HS}.
\end{align*}
Here we have written $1$ for the trivial representation of $G$. Now by Cauchy--Schwarz and the algebra property $\|RS\|_\textup{HS}\leq\|R\|_\textup{HS}\|S\|_\textup{HS}$ of the Hilbert--Schmidt norm we have
\[
  |\langle\hat{f}(\xi)\hat{g}(\xi),\hat{h}(\xi)\rangle_\HS| \leq \|\hat{f}(\xi)\hat{g}(\xi)\|_\textup{HS} \|\hat{h}(\xi)\|_\textup{HS} \leq \|\hat{f}(\xi)\|_\textup{HS} \|\hat{g}(\xi)\|_\textup{HS} \|\hat{h}(\xi)\|_\textup{HS},
\]
so by using Cauchy--Schwarz together with Parseval again we have
\begin{equation}\label{mixcomp}
\begin{aligned}
  \sum_{\xi\neq 1} d_\xi |\langle\hat{f}(\xi)\hat{g}(\xi),\hat{h}(\xi)\rangle_\textup{HS}|
  &\leq \sum_{\xi\neq 1} d_\xi \|\hat{f}(\xi)\|_\textup{HS} \|\hat{g}(\xi)\|_\textup{HS} \|\hat{h}(\xi)\|_\textup{HS}\\
  &\leq \max_{\xi\neq 1} \|\hat{f}(\xi)\|_\textup{HS} \sum_\xi d_\xi \|\hat{g}(\xi)\|_\textup{HS} \|\hat{h}(\xi)\|_\textup{HS}\\
  &\leq m^{-1/2} \|f\|_2 \|g\|_2 \|h\|_2\\
  &=m^{-1/2} \alpha^{1/2} \beta^{1/2} \gamma^{1/2}.
\end{aligned}
\end{equation}
This proves Theorem~\ref{gowers}.
\end{proof}

For the rest of the paper we specialize to the alternating group $G = A_n$. As explained in the introduction, Theorem~\ref{gowers} provides a satisfactory estimate for $\langle 1_X*1_Y,1_Z\rangle$ only if $\alpha\beta\gamma\gg1/n$. However, as observed by Ellis and Green (personal communication), by examination of the proof above it is clear that only the standard $(n-1)$-dimensional representation $\sigma$ is problematic: Again taking $f=1_X, g=1_Y, h = 1_Z$, we have
\begin{align}
	\langle f*g,h\rangle
	&= \sum_{\xi} d_\xi \langle \hat{f}(\xi)\hat{g}(\xi),\hat{h}(\xi)\rangle_\HS\nonumber\\
	&= \alpha\beta\gamma + (n-1)\langle\hat{f}(\sigma)\hat{g}(\sigma),\hat{h}(\sigma)\rangle_\HS + \sum_{\xi\neq 1,\sigma} d_\xi\, \langle\hat{f}(\xi)\hat{g}(\xi),\hat{h}(\xi)\rangle_\HS, \label{ellisgreen}
\end{align}
and since $d_\xi\gtrsim n^2$ for $\xi\neq 1,\sigma$ (this follows from the hook formula: see for example~\cite[Result~2]{rasala}) we have, by straightforward adaptation of~\eqref{mixcomp},
\[
  \sum_{\xi\neq 1,\sigma} d_\xi |\langle\hat{f}(\xi)\hat{g}(\xi),\hat{h}(\xi)\rangle_\HS| \lesssim n^{-1} \alpha^{1/2}\beta^{1/2}\gamma^{1/2}.
\]
This is negligible compared to the main term $\alpha\beta\gamma$ whenever $\alpha\beta\gamma \gg n^{-2}$. Thus it remains only to control $(n-1)\langle \hat{f}(\sigma)\hat{g}(\sigma),\hat{h}(\sigma)\rangle_\HS$.

For each $i\in\Omega$ we have a map $S_n\to\Omega$ given by $\pi\mapsto \pi(i)$, which induces a map $L^2(\Omega)\to L^2(S_n)$ given by composition with $\pi\mapsto \pi(i)$. We denote by $p_i$ the adjoint of this map, and we call $p_i f$ the \emph{pushforward} of $f$ under $\pi\mapsto\pi(i)$. Explicitly $p_if$ is defined by
\[
  p_i f(\omega) = n \int_{S_n} f(\pi) 1_{\pi(i)=\omega} = \frac1{(n-1)!} \sum_{\substack{\pi\in S_n\\\pi(i)=\omega}} f(\pi),
\]
and for any $g\in L^2(\Omega)$ we have
\[
  \int_{S_n} f(\pi) g(\pi(i)) = \int_\Omega p_if(\omega) g(\omega).
\]

Now by direct computation whenever at least one of $\tint f, \tint g, \tint h$ is zero we have
\begin{align}
  (n-1)\langle\hat{f}(\sigma)\hat{g}(\sigma),\hat{h}(\sigma)\rangle_\HS
  &= (n-1) \int_{S_n^2} (f*g)(x) \overline{h(y)} \tr\sigma(xy^{-1})\nonumber\\
  &= (n-1) \int_{S_n^2} (f*g)(x) \overline{h(y)} \(\sum_{i\in\Omega} 1_{x(i)=y(i)} - 1\)\nonumber\\
  &= (n-1) \sum_{i\in\Omega} \int_{S_n^2} (f*g)(x) \overline{h(y)} 1_{x(i)=y(i)}\nonumber\\
  &= \frac{n-1}n \sum_{i\in\Omega} \langle f*p_ig,p_ih\rangle\nonumber\\
  &\sim \sum_{i\in\Omega} \langle f*p_ig,p_ih\rangle\label{secondterm}.
\end{align}
Here we define the convolution of functions $f\in L^2(S_n)$ and $u\in L^2(\Omega)$ by the same formula:
\[
  f*u(\omega) = \int_{S_n} f(\pi) u(\pi^{-1}(\omega));
\]
$f*u$ is then a function defined on $\Omega$, and one may check the relation
\[
  p_i(f*g) = f*p_ig.
\]
Note that the assumption that one of $\tint f,\tint g, \tint h$ is zero is innocuous, since changing $f$ by a constant does not change $\hat{f}(\sigma)$.

Similarly whenever $\tint f = 0$ we have the following remnant of Parseval's identity:
\begin{equation}
\|f\|_2^2 \geq (n-1)\|\hat{f}(\sigma)\|^2_\HS \sim \sum_{i\in\Omega} \|p_i f\|_2^2.\label{remnantparseval}
\end{equation}

We can now summarize the rest of the proof. We will prove a concentration-of-measure result for the randomly rearranged inner product
\[
  \langle \pi*p_ig,p_ih\rangle = \int_\Omega p_ig(\pi^{-1}(\omega)) p_ih(\omega).
\]
This result will ensure that
\[
  \langle \pi*p_ig,p_ih\rangle \approx \tint p_ig \tint p_ih = \tint g \tint h
\]
with high probability, and with a tail depending on the variances $\|p_ig-\tint g\|_2^2$ and $\|p_ih - \tint h\|_2^2$ of $p_i g$ and $p_i h$ and on the entropies of $p_ig/\tint g$ and $p_ih/\tint h$. Crucially, when the variances are small there is rather strong concentration from below, unless one of the entropies is large. We will then apply the Parseval remnant~\eqref{remnantparseval} and a version of subadditivity of entropy to conclude.

\section{An inequality of Carlen, Lieb, and Loss}\label{sec:cll}

The following inequality was proved by Carlen, Lieb, and Loss~\cite{cll}.

\begin{theorem}\label{cll} Let $f_1,\dots,f_n:\Omega\to\C$ be functions. Then
\[
  \int_{S_n} \prod_{i=1}^n |f_i(\pi(i))| \leq \prod_{i=1}^n \|f_i\|_2.
\]
\end{theorem}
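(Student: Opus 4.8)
The plan is to proceed by induction on $n$, peeling off one variable at a time and using a clever application of Hölder's inequality. The base case $n=1$ is trivial since $\int_{S_1} |f_1(\pi(1))| = |f_1(1)| = \|f_1\|_2$. For the inductive step, I would condition on the value $\pi(n) = \omega$ and try to reduce to the $(n-1)$-variable statement on $\Omega \setminus \{\omega\}$. The natural move is to write
\[
  \int_{S_n} \prod_{i=1}^n |f_i(\pi(i))| = \frac{1}{n} \sum_{\omega \in \Omega} |f_n(\omega)| \int_{S_{n-1}^{(\omega)}} \prod_{i=1}^{n-1} |f_i(\sigma(i))|,
\]
where $S_{n-1}^{(\omega)}$ denotes permutations of $\Omega \setminus \{\omega\}$. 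Applying the inductive hypothesis to each inner integral bounds it by $\prod_{i=1}^{n-1} \|f_i \mathbf{1}_{\Omega \setminus \{\omega\}}\|_{L^2(\Omega \setminus \{\omega\})}$, but the normalization of these $L^2$ norms shifts from $n$ points to $n-1$ points, so some care with constants is needed. After unwinding, one is left needing an inequality of the shape
\[
  \frac{1}{n}\sum_{\omega} |f_n(\omega)| \prod_{i=1}^{n-1} \Bigl( \sum_{\eta \ne \omega} |f_i(\eta)|^2 \Bigr)^{1/2} \leq \prod_{i=1}^n \Bigl( \frac{1}{n}\sum_\eta |f_i(\eta)|^2 \Bigr)^{1/2},
\]
and this is where the real content lies: it is a purely finite-dimensional inequality comparing a sum of products of ``leave-one-out'' $\ell^2$ norms against a product of full $\ell^2$ norms. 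I expect this to follow from Hölder's inequality with exponents summing appropriately together with the trivial bound $\sum_{\eta \ne \omega}|f_i(\eta)|^2 \le \sum_\eta |f_i(\eta)|^2$, though extracting the exact constant $1$ (rather than something like $n^{c}$) is the delicate point.

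An alternative, and perhaps cleaner, route would be to interpolate: the inequality is a multilinear estimate, and by replacing each $|f_i|$ by $|f_i|^{t}$ and tracking homogeneity one can sometimes reduce to indicator functions $f_i = \mathbf{1}_{A_i}$. In that case the left side counts (a fraction of) permutations $\pi$ with $\pi(i) \in A_i$ for all $i$ — a permanent-type quantity — and the right side is $\prod_i (|A_i|/n)^{1/2}$. The bound then becomes a statement that a system of distinct representatives is ``rare'' unless the sets are large, which has the flavor of the van der Waerden / Bregman–Minc circle of ideas; but since the paper attributes the result to Carlen, Lieb, and Loss and flags it as a Brascamp–Lieb-type inequality, I would expect the intended proof to be the inductive Hölder argument sketched above rather than a combinatorial one.

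The main obstacle, in either approach, is controlling constants exactly. The inequality is tight (equality holds when all $f_i$ are equal and supported on a single point, or more interestingly in certain Gaussian-like limits), so there is no slack to waste: a naive induction that loses a factor of $(n/(n-1))^{1/2}$ at each step would accumulate to an unacceptable $\sqrt{n}$ overall. The right formulation of the inductive hypothesis — very plausibly an inequality with the normalized measures built in so that the $n \to n-1$ transition is exact — is the crux. I would look to the original Carlen–Lieb–Loss paper for the precise inductive invariant; it likely involves stating the bound not just for $S_n$ but for a weighted or rescaled family that closes under the leave-one-out operation.
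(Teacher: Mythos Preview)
The paper does not prove Theorem~\ref{cll}: it quotes the inequality as a result of Carlen, Lieb, and Loss and then uses it as a black box to derive Theorem~\ref{subadditivity} and Theorem~\ref{bernstein}. There is therefore no in-paper proof to compare your proposal against.

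On your sketch itself: the inductive structure you outline (condition on $\pi(n)=\omega$, apply the $(n-1)$-case on $\Omega\setminus\{\omega\}$, then close up with a leave-one-out inequality) is exactly the architecture of the original Carlen--Lieb--Loss argument, and you have correctly located the entire difficulty in the finite-dimensional inequality
\[
  \frac{1}{n}\sum_{\omega} |f_n(\omega)|\,\prod_{i=1}^{n-1}\Bigl(\frac{1}{n-1}\sum_{\eta\neq\omega}|f_i(\eta)|^2\Bigr)^{1/2}
  \;\le\;
  \prod_{i=1}^{n}\Bigl(\frac{1}{n}\sum_{\eta}|f_i(\eta)|^2\Bigr)^{1/2}.
\]
What your proposal does not do is prove this step, and the route you tentatively suggest (``H\"older with exponents summing appropriately together with the trivial bound $\sum_{\eta\neq\omega}|f_i(\eta)|^2\le\sum_\eta|f_i(\eta)|^2$'') cannot succeed with constant~$1$. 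Indeed, discarding the $\omega$-term and then applying $\|f_n\|_1\le\|f_n\|_2$ gives only
\[
  \text{LHS}\;\le\;\Bigl(\tfrac{n}{n-1}\Bigr)^{(n-1)/2}\prod_{i=1}^{n}\|f_i\|_2,
\]
a loss of roughly $\sqrt{e}$ per inductive step, which compounds unacceptably. The sharp inequality genuinely requires exploiting the subtraction of $|f_i(\omega)|^2$; in the original paper this is handled by a Cauchy--Schwarz in $\omega$ followed by a careful convexity argument on the resulting product $\prod_{i<n}(1-b_{i,\omega})$ under the constraint $\sum_\omega b_{i,\omega}=1$. Your own diagnosis --- that the precise inductive invariant is the crux and that one should consult the source --- is accurate; as written, the proposal is an outline with the main lemma left open rather than a proof.
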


This inequality can be viewed in at least two ways. First, as it resembles the classical Loomis--Whitney inequality, or more generally the Brascamp--Lieb inequality, it can be viewed as an inequality of Brascamp--Lieb-type for the symmetric group. In this light Theorem~\ref{cll} bears a striking resemblance to another Brascamp--Lieb-type inequality proved by Carlen, Lieb, and Loss for the sphere: see~\cite{cll-sphere}.

Theorem~\ref{cll} can also be viewed as a Hadamard-type inequality for permanents. The classical Hadamard inequality states that if $M$ is a matrix with columns $v_1,\dots,v_n\in\C^n$ then
\[
  |\det(M)| \leq \prod_{i=1}^n |v_i|,
\]
where $|\cdot|$ is the usual Euclidean norm on $\C^n$. By comparison Theorem~\ref{cll} states that
\[
  |\textup{perm}(M)| \leq \frac{n!}{n^{n/2}} \prod_{i=1}^n |v_i|.
\]

In this section we deduce two consequences of Theorem~\ref{cll}, neither of them original: a version of entropy subadditivity for the symmetric group, and a concentration-of-measure result for a statistic of Hoeffding.

\subsection{Entropy subadditivity for the symmetric group}

Given $f:S_n\to[0,\infty)$ with $\alpha = \tint f$ we define the entropy of $f$ to be
\[
  S(f) = \int_{S_n} (f/\alpha) \log (f/\alpha).
\]
To be more precise we might call $S(f)$ the Kullback--Liebler divergence of $(f/\alpha)\,d\pi$ from uniform, but we will use the shorter term for simplicity. Similarly, given $g:\Omega\to[0,\infty)$ with $\beta = \tint g$ we define
\[
  S(g) = \int_\Omega (g/\beta) \log (g/\beta).
\]
All logarithms are of course taken to the natural base.

In the coup de gr\^ace of our argument we will apply the following entropy-subadditivity inequality.

\begin{theorem}[Subadditivity of entropy]\label{subadditivity}
Suppose $f:S_n\to [0,\infty)$. Then
\[
  S(f) \geq \frac12 \sum_{i\in\Omega} S(p_i f).
\]
\end{theorem}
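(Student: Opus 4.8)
The plan is to derive the subadditivity inequality directly from the Carlen--Lieb--Loss inequality (Theorem~\ref{cll}) by the standard duality between entropy and norms. First I would normalize: assume $\tint f = 1$, so that $S(f) = \tint f \log f$, and similarly let $h_i = p_i f / \tint p_i f = p_i f$ (since $p_i$ preserves the integral), so $S(p_i f) = \int_\Omega h_i \log h_i$. The goal becomes
\[
  \int_{S_n} f \log f \ \geq\ \frac12 \sum_{i\in\Omega} \int_\Omega h_i \log h_i,
\]
where $h_i(\omega) = (n-1)!^{-1}\sum_{\pi(i)=\omega} f(\pi)$ is the pushforward under $\pi \mapsto \pi(i)$.

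The key step is to feed into Theorem~\ref{cll} the functions $f_i = h_i^{1/2}$ for $i \in \Omega$. Since $\|h_i^{1/2}\|_2^2 = \int_\Omega h_i = 1$, the right-hand side of the CLL inequality is $1$, so
\[
  \int_{S_n} \prod_{i=1}^n h_i(\pi(i))^{1/2} \ \leq\ 1.
\]
Equivalently, writing $F(\pi) = \prod_{i=1}^n h_i(\pi(i))$, we have $\tint_{S_n} F^{1/2} \leq 1$. Now I would invoke the variational (Gibbs) characterization of entropy: for any probability density $f$ and any function $\Phi \geq 0$ with $\tint \Phi \le 1$ one has $\tint f \log f \geq \tint f \log \Phi$, because $\tint f \log(f/\Phi) = S(f \| \Phi) + \log(1/\tint\Phi)\cdot(\ldots)$ — more cleanly, by Jensen applied to $-\log$, $\tint f \log(\Phi/f) \le \log \tint \Phi \le 0$. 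Applying this with $\Phi = F^{1/2}$ gives
\[
  S(f) = \int_{S_n} f \log f \ \geq\ \int_{S_n} f \log F^{1/2} \ =\ \frac12 \int_{S_n} f(\pi) \sum_{i=1}^n \log h_i(\pi(i))\, d\pi \ =\ \frac12 \sum_{i\in\Omega} \int_\Omega h_i \log h_i,
\]
where the last equality is the defining adjointness of the pushforward, $\int_{S_n} f(\pi)\, u(\pi(i))\, d\pi = \int_\Omega h_i(\omega) u(\omega)\, d\omega$, applied with $u = \log h_i$. This is exactly the claimed bound.

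The one genuine subtlety — and the step I expect to be the main obstacle — is the handling of zeros: if $f$ vanishes on a set of positive measure then some $h_i(\omega)$ may be $0$, and $\log h_i$ and $F^{1/2}$ degenerate. The clean fix is a standard approximation argument: apply the inequality to $f_\delta = (f + \delta)/(1+\delta)$ for $\delta > 0$, which is bounded away from $0$ and has integral $1$, so all logarithms are finite and the manipulation above is rigorous; then let $\delta \to 0$. Continuity of $S(\cdot)$ and of $S(p_i \cdot)$ under this perturbation (both are finite sums/integrals of $x \log x$, which is continuous and bounded below) lets us pass to the limit, noting that $p_i f_\delta \to p_i f$ pointwise and the entropies converge by dominated convergence. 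One should also remark that $S(f)$ could be $+\infty$, in which case there is nothing to prove, so we may assume $S(f) < \infty$ throughout. Apart from this bookkeeping, the argument is a direct application of Theorem~\ref{cll} together with the Gibbs variational principle.
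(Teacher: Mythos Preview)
Your argument is correct and is essentially identical to the paper's proof: after your normalization $\alpha=1$, the paper defines $f'(\pi)=\prod_i p_if(\pi(i))^{1/2}$ (your $F^{1/2}$), uses Jensen in the form $0\leq\int (f/\alpha)\log\bigl((f/\alpha)/(f'/\alpha')\bigr)$ (your Gibbs inequality $\int f\log f\geq\int f\log\Phi$), and then applies Theorem~\ref{cll} to show $\alpha'\leq\alpha^{n/2}$ (your $\int F^{1/2}\leq 1$). The only difference is that the paper keeps a general $\alpha$ throughout, producing the extra terms $-\tfrac{n}{2}\log\alpha+\log\alpha'$ which vanish under your normalization; your treatment of zeros by approximation is extra care that the paper omits.
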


Note that this is much stronger than what one gets from just applying usual entropy subadditivity to $f$ as a function $[n]^n\to[0,\infty)$. 

Theorems~\ref{cll} and~\ref{subadditivity} are more closely related than it may appear, as shown in some generality by Carlen and Cordero-Erausquin~\cite{cc}. We repeat the rather simple deduction of Theorem~\ref{subadditivity} from Theorem~\ref{cll} here for the convenience of the reader.

\begin{proof}[Proof of Theorem~\ref{subadditivity}]
Put $\alpha = \tint f$. Define $f':S_n\to[0,\infty)$ by
\[
  f'(\pi) = \prod_{i=1}^n p_if(\pi(i))^{1/2},
\]
and put $\alpha' = \tint f'$. Then by Jensen's inequality we have
\begin{align}
  0
  &\leq \int_{S_n} (f/\alpha) \log\(\frac{f/\alpha}{f'/\alpha'}\)\nonumber\\
  &= S(f) - \frac12 \sum_{i=1}^n \int_{S_n} (f/\alpha) \log p_if(\pi(i)) + \log \alpha'\nonumber\\
  &= S(f) - \frac12 \sum_{i=1}^n \int_{\Omega} (p_if/\alpha) \log p_if + \log\alpha'\nonumber\\
  &= S(f) - \frac12 \sum_{i=1}^n S(p_if) - \frac{n}{2} \log\alpha + \log \alpha'.\label{jensen}
\end{align}
On the other hand by Theorem~\ref{cll} we have
\[
  \alpha' = \int_{S_n} \prod_{i=1}^n p_if(\pi(i))^{1/2} \leq \prod_{i=1}^n \(\int_\Omega p_i f\)^{1/2} = \alpha^{n/2},
\]
so $\log \alpha' \leq \frac{n}{2} \log \alpha$ and the theorem follows from~\eqref{jensen}.
\end{proof}

\subsection{Concentration for Hoeffding's statistic}

Given an $n\times n$ complex matrix $(a_{ij})$ we consider the sum
\[
  X = \sum_{i=1}^n a_{i\pi(i)},
\]
where $\pi\in S_n$ is random permutation. The study of such sums goes back at least to Hoeffding~\cite{hoeffding}, who proved a central limit theorem for $X$ under suitable hypotheses, and so we refer to $X$ as \emph{Hoeffding's statistic}. More recently work on Hoeffding's statistic has been more or less wedded to Stein's method of exchangeable pairs, starting with Bolthausen's~\cite{bolthausen} Berry--Esseen-type estimate for the error in Hoeffding's theorem, and following with the work of Chatterjee~\cite{chatterjee}, who proved the first nonasymptotic concentration-type result for such sums.

In the next section we will need the following Bernstein-type concentration inequality for Hoeffding's statistic, which was proved in the more general context of random matrix theory by Mackey, Jordan, Chen, Farrell, and Tropp~\cite[Corollary~10.3]{mjcft}, using an extension of Chatterjee's method.

\begin{theorem}\label{bernstein}
Let $(a_{ij})$ be an $n\times n$ matrix such that $\sum_{i,j=1}^n a_{ij} = 0$ and such that $|a_{ij}|\leq M$ for each $i,j$. Let $v = \frac1n \sum_{i,j=1}^n |a_{ij}|^2$. Let $\pi\in S_n$ be chosen uniformly at random, and let
\[
  X = \sum_{i=1}^n a_{i\pi(i)}.
\]
Then for all $t>0$ we have
\[
  \P(|X|>t) \leq 2 \exp\( \frac{-ct^2}{v + Mt}\),
\]
where $c$ is some positive constant.
\end{theorem}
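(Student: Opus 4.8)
The plan is to prove Theorem~\ref{bernstein} by Stein's method of exchangeable pairs, in the style of Chatterjee's approach to concentration \cite{chatterjee} as extended in \cite{mjcft}. First I would reduce to the case of a real matrix: writing $X = X_1 + iX_2$, where $X_1,X_2$ are the Hoeffding statistics of $(\mathrm{Re}\,a_{ij})$ and $(\mathrm{Im}\,a_{ij})$, both have vanishing total sum and entries bounded by $M$, the event $|X|>t$ forces $|X_1|>t/2$ or $|X_2|>t/2$, and $v_1+v_2 = v$, so a real version of the theorem suffices. Next I would build the Stein pair: let $\pi$ be uniform on $S_n$ and set $\pi' = \pi\circ(I\,J)$ with $I,J$ independent and uniform on $\Omega$, so that $(\pi,\pi')$ is exchangeable. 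With $f(\pi) = X = \sum_i a_{i\pi(i)}$ we have
\[
  f(\pi) - f(\pi') = a_{I\pi(I)} + a_{J\pi(J)} - a_{I\pi(J)} - a_{J\pi(I)}.
\]
Averaging over $(I,J)$ and using that $\pi$ is a bijection (so $\sum_j a_{i\pi(j)} = \sum_k a_{ik}$ and $\sum_i a_{i\pi(j)} = \sum_i a_{ij}$) together with the hypothesis $\sum_{i,j}a_{ij}=0$, the two mixed sums vanish and one gets the approximate-eigenfunction identity
\[
  \E\big[f(\pi)-f(\pi')\bigm|\pi\big] = \tfrac2n\, f(\pi),
\]
i.e.\ Chatterjee's linearity condition with $\lambda = 2/n$; the relevant conditional variance proxy is then $v(\pi) = \tfrac n4\,\E\big[(f(\pi)-f(\pi'))^2\bigm|\pi\big]$.

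The main work, and the main obstacle, is to bound $v(\pi)$ sharply enough to produce the Bernstein denominator $v+Mt$ rather than a crude sub-Gaussian bound with parameter of order $nM^2$. Expanding the square and integrating over $(I,J)$, every cross term either factors through $f(\pi)$ or collapses, by bijectivity of $\pi$, into a sum of the form $\sum_i a_{i\pi(i)} r_i$ or $\sum_i a_{i\pi(i)} c_{\pi(i)}$, where $r_i$ and $c_k$ are the row and column sums of $(a_{ij})$; the upshot is
\[
  v(\pi) = \tfrac12 \sum_i a_{i\pi(i)}^2 + O(v) + O\!\Big(\tfrac1n\big|\sum_i a_{i\pi(i)} r_i\big| + \tfrac1n\big|\sum_i a_{i\pi(i)} c_{\pi(i)}\big| + \tfrac1n f(\pi)^2\Big).
\]
Two features make this usable: $\E_\pi\big[\sum_i a_{i\pi(i)}^2\big] = v$, so $v(\pi)$ has mean $O(v)$; and $\sum_i a_{i\pi(i)}^2 \le M\sum_i |a_{i\pi(i)}|$, so the fluctuating part of $v(\pi)$ is dominated by $M$ times a non-negative Hoeffding statistic. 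Feeding this into Chatterjee's moment-generating-function inequality $m'(\theta) \le |\theta|\,\E[v(\pi)e^{\theta f(\pi)}]$ for $m(\theta) = \E[e^{\theta f(\pi)}]$, and using $|a_{ij}|\le M$ to absorb the fluctuating contribution into a term of the form $O(M\theta)\,m'(\theta)$, yields a differential inequality of the shape $m'(\theta)\big(1-O(M\theta)\big) \le O(v\theta)\,m(\theta) + O(1)$ valid for $\theta \lesssim 1/M$; integrating and optimizing $\theta$ in the ensuing Chernoff bound gives $\P(|X|>t) \le 2\exp\!\big(-ct^2/(v+Mt)\big)$.

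I expect the delicate part to be this last step — in particular estimating $\E\big[(\sum_i a_{i\pi(i)}^2) e^{\theta f(\pi)}\big]$, where one exploits both $|a_{ij}|\le M$ and the fact that conditioning on a single value $\pi(i)=k$ changes the conditional MGF of the remaining sum only slightly — since it is precisely here that a careless treatment loses the Bernstein form. This is exactly the situation the refined exchangeable-pairs machinery of \cite[Corollary~10.3]{mjcft} is designed to handle, and I would invoke it (or reproduce the relevant special case of Chatterjee's argument). An alternative route, which I would keep in reserve, is to apply Freedman's martingale concentration inequality to the Doob martingale obtained by revealing $\pi(1),\dots,\pi(n)$ one value at a time: the increments are then $O(M)$ and the predictable quadratic variation has expectation $O(v)$, and the analogous obstacle becomes controlling that quadratic variation with high probability rather than merely in expectation.
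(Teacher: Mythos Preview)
Your approach via Stein's method of exchangeable pairs is valid---indeed, it is precisely the route taken in \cite{mjcft}, which the paper cites as the original source of the result. The paper, however, deliberately gives a \emph{different} proof, explicitly to avoid Stein's method. After the same reduction to the real case and a further reduction so that each \emph{row} sums to zero (not just the grand total), the paper applies the Carlen--Lieb--Loss inequality (Theorem~\ref{cll}) directly to the moment generating function:
\[
  \E e^{\lambda X} = \int_{S_n} \prod_{i} e^{\lambda a_{i\pi(i)}} \le \prod_{i}\Bigl(\frac1n\sum_{j} e^{2\lambda a_{ij}}\Bigr)^{1/2} = \bigl(\E e^{2\lambda Y}\bigr)^{1/2},
\]
where $Y$ is the sum of \emph{independent} random variables, the $i$th uniform on $\{a_{i1},\dots,a_{in}\}$. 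From here the argument is the textbook Bernstein MGF computation---expand $e^{2\lambda a_{ij}}$ via $h(x)=(e^x-1-x)/x^2$, bound, and optimize in $\lambda$---with no conditional-variance analysis required at all.

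The contrast is instructive: the step you flag as delicate (controlling $\E[v(\pi)e^{\theta f(\pi)}]$ so as to preserve the Bernstein denominator $v+Mt$) is exactly what the paper's argument sidesteps, because CLL converts the dependent problem into an independent one in a single stroke. Since CLL is already central to the paper (it also yields the entropy subadditivity of Theorem~\ref{subadditivity}), the author's proof is shorter and more thematically unified. Your approach has the merit of not needing CLL, but as written you have largely deferred the genuinely hard step to the citation of \cite[Corollary~10.3]{mjcft}; the Freedman-martingale alternative you mention in reserve would face the same obstacle of controlling the predictable quadratic variation with high probability rather than just in expectation.
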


The purpose of this subsection is to give another proof of the above theorem, not relying on Stein's method, but instead relying on the Carlen--Lieb--Loss inequality Theorem~\ref{cll}. The main value of doing so is to reduce the reliance of the present paper on results proved elsewhere, but it may also be of independent interest.

\begin{proof}[Proof of Theorem~\ref{bernstein}]
By replacing $(a_{ij})$ with $(a_{ij} - \frac1n \sum_{j'} a_{ij'})$ if necessary and slightly reducing the constant $c$ we may assume that $\sum_j a_{ij} = 0$ for each $i$: note that this operation does not change $X$, it can at worst double $\max |a_{ij}|$, and it can only reduce $v$. We may also assume that $(a_{ij})$ is real, for otherwise we may just deal with the real and imaginary parts separately.

Now for $\lambda>0$ we have, by Theorem~\ref{cll},
\begin{align}
  \E\exp(\lambda X)
  &=\int_{S_n} \prod_{i=1}^n \exp(\lambda a_{i\pi(i)})\nonumber\\
  &\leq \prod_{i=1}^n \(\frac1n \sum_{j=1}^n \exp(2\lambda a_{ij})\)^{1/2}.\label{expmom}
\end{align}
Define
\[
  h(x) = \frac{e^x - 1 - x}{x^2} = \sum_{k=0}^\infty \frac{x^k}{(k+2)!}.
\]
Then
\begin{align*}
  \frac1n \sum_{j=1}^n \exp(2\lambda a_{ij})
  &= \frac1n \sum_{j=1}^n \( 1 + 2\lambda a_{ij} + 4\lambda^2 a_{ij}^2 h(2\lambda a_{ij})\)\\
  &= 1 + \frac1n \sum_{j=1}^n 4\lambda^2 a_{ij}^2 h(2\lambda a_{ij})\\
  &\leq 1 + 4\lambda^2 \(\frac1n \sum_{j=1}^n a_{ij}^2 \) h(2\lambda M)\\
  &\leq \exp\(4\lambda^2 \(\frac1n \sum_{j=1}^n a_{ij}^2\) h(2\lambda M)\),
\end{align*}
so from~\eqref{expmom} and the simple bound
\[
 h(x) \leq \sum_{k=0}^\infty x^k = \frac1{1-x} \qquad(0<x<1),
\]
we have
\[
  \E\exp(\lambda X) \leq \exp\(\frac{2\lambda^2 v}{1-2\lambda M}\)
\]
for $2\lambda M<1$. The claimed result now follows by bounding
\[
  \P(X>t) = \P(\exp(\lambda X) > e^{\lambda t}) \leq e^{-\lambda t} \E\exp(\lambda X) \leq \exp\(-\lambda t + \frac{2\lambda^2 v}{1 - 2\lambda M}\)
\]
and putting
\[
  \lambda = \frac{t}{4v + 2Mt},
\]
and similarly bounding $\P({-X}>t)$.
\end{proof}

The reader familiar with the usual Bernstein inequality may recognize that from~\eqref{expmom} onwards all we have done is reproduce the usual proof. Indeed, if $Y$ is the sum of $n$ independent random variables, the $i$th of which takes values $a_{i1},\dots, a_{in}$ each with probability $1/n$, then~\eqref{expmom} states that
\[
  \E \exp(\lambda X) \leq \(\E\exp(2\lambda Y)\)^{1/2},
\]
so it suffices to extract from the proof of the usual Bernstein inequality an upper bound for $\E\exp(2\lambda Y)$.

\section{Refined concentration for rearrangements}

In this section we prove a refined concentration estimate for Hoeffding's statistic
\[
  X = \sum_{i=1}^n a_{i\pi(i)}
\]
under the hypothesis that $a_{ij} = u_i v_j$ for some $(u_i)$ and $(v_i)$ for which we have some sort of entropy control. Moreover we are particularly interested in the concentration from below, which in certain regimes we expect to be stronger than the concentration from above.

\begin{theorem}\label{thm:rearrangement-concentration}
Let $f:S_n\to[0,1]$ be a function with $\tint f = \alpha$. Let $g_1,g_2:\Omega\to[0,1]$ be functions with $\tint g_1 = \beta$ and $\tint g_2 = \gamma$. Then
\begin{align*}
 - \(\langle f*g_1,g_2\rangle - \alpha \beta\gamma \)
 &\lesssim \frac{\alpha \|g_1 - \beta \|_2 \|g_2 - \gamma\|_2 \log n}{n^{1/2}}\\
 &\qquad+ \frac{\alpha^{1/2}\beta^{1/2}\gamma^{1/2}(\beta^{1/2}+\gamma^{1/2}) S(g_1)^{1/2} S(g_2)^{1/2} (\log n)^{5/2}}{n^{1/2}}\\
 &\qquad+ O(n^{-99}).
\end{align*}
\end{theorem}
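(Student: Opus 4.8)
The plan is to write $\langle f*g_1,g_2\rangle - \alpha\beta\gamma$ as an average over $\pi\in S_n$ (weighted by $f$) of a Hoeffding-type statistic built from $g_1,g_2$, and then to apply the Bernstein bound of Theorem~\ref{bernstein} in a way that is sharpened using the entropy hypothesis. More precisely, after centering $g_1\mapsto g_1-\beta$ and $g_2\mapsto g_2-\gamma$ (the cross terms producing exactly $\alpha\beta\gamma$ plus error terms controlled by Theorem~\ref{gowers}-type estimates on $\hat g_i$), the quantity to control is
\[
  \int_{S_n} f(\pi)\, \frac1n\sum_{i\in\Omega}(g_1-\beta)(\pi^{-1}(i))(g_2-\gamma)(i)\,d\pi,
\]
so for each fixed $\pi$ the inner sum is $X_\pi=\sum_i a_{i\,\pi(i)}$ with $a_{ij}=\tfrac1n(g_1-\beta)(j)(g_2-\gamma)(i)$. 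This already has mean zero over $S_n$, and Theorem~\ref{bernstein} gives a tail with $v\sim n^{-1}\|g_1-\beta\|_2^2\|g_2-\gamma\|_2^2$ and $M\sim n^{-1}\|g_1-\beta\|_\infty\|g_2-\gamma\|_\infty\le n^{-1}$. Integrating the tail against $f$ (which is supported on a set of density $\alpha$ and bounded by $1$) is what produces the factor $\alpha$ and the first term on the right-hand side, the $n^{-1/2}\log n$ coming from the $\sqrt v$-regime of Bernstein after a union-bound-free integration of $\P(|X_\pi|>t)$.

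The subtlety — and the reason the theorem has a second, entropy-dependent term rather than just the naive Bernstein term — is the one-sided nature and the appearance of $S(g_1),S(g_2)$. I expect to split $g_i$ (or the relevant tail events) according to whether $g_i$ is large or small: on the bulk where $g_i=O(\beta\log n)$, etc., the truncated matrix entries satisfy the much better bound $M\lesssim n^{-1}\beta^{1/2}\gamma^{1/2}(\log n)$-ish, and Bernstein on this truncation gives concentration strong enough that the lower tail contributes only the first RHS term; on the complementary "heavy" part, one cannot expect concentration, but the mass there is controlled by entropy via a Markov/Chebyshev-type inequality ($\tint g_i 1_{g_i>\lambda\beta}\lesssim \beta S(g_i)/\log\lambda$), and one only needs a lower bound, so the heavy contribution to $-(\langle f*g_1,g_2\rangle-\alpha\beta\gamma)$ is bounded crudely by its $L^1$ size, yielding the $S(g_1)^{1/2}S(g_2)^{1/2}$ factor after Cauchy--Schwarz across the two functions and across the $n$ indices $i$. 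The powers of $\log n$ (the $5/2$) should emerge from optimizing the truncation level $\lambda$ against both the Bernstein exponent and the entropy bound, and the factor $\beta^{1/2}+\gamma^{1/2}$ from the asymmetry in which of the two truncations is the binding one.

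Concretely the key steps, in order, are: (1) center $g_1,g_2$ and dispose of the constant and linear cross-terms, absorbing them into $\alpha\beta\gamma$ plus negligible Fourier error (using $d_\xi\gtrsim n$ off the trivial/standard part as in~\eqref{ellisgreen}, or directly bounding $\langle f,1\rangle$-type inner products); (2) for fixed $\pi$, recognize the resulting bilinear expression as Hoeffding's statistic and record the Bernstein tail from Theorem~\ref{bernstein}; (3) integrate the tail against $f\,d\pi$, using only $0\le f\le1$ and $\tint f=\alpha$, to get the first RHS term together with the $O(n^{-99})$ slack from the far tail; (4) redo step~(3) after truncating $g_1,g_2$ at a level $\lambda$ of size $\mathrm{polylog}(n)$, so that the truncated Bernstein term is of the claimed shape, and separately estimate the truncation-error (heavy part) in $L^1$ using the entropy bound $\int g_i 1_{g_i>\lambda\beta_i}\lesssim \beta_i S(g_i)/\log\lambda$; (5) optimize $\lambda$ and collect terms. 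The main obstacle is step~(4): getting the interaction between the Bernstein truncation parameter $M$ and the entropy tail bound to balance so that exactly $S(g_1)^{1/2}S(g_2)^{1/2}$ (rather than $S(g_1)+S(g_2)$, which would be useless later) appears, and keeping the exponent of $\log n$ down to $5/2$ — this requires being careful to apply Cauchy--Schwarz over the product structure $a_{ij}=u_iv_j$ rather than bounding the two heavy parts separately, and using that we only need a lower bound so that heavy$\times$heavy and heavy$\times$light terms may be discarded with a sign.
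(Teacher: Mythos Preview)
Your overall setup is right: center, recognise Hoeffding's statistic, apply Theorem~\ref{bernstein}, and exploit the one-sidedness. But the mechanism you propose for the entropy term --- a single truncation $g_i\le\lambda\beta_i$ with the heavy part bounded ``crudely by its $L^1$ size'' via the Markov-entropy inequality $\tint g_i 1_{g_i>\lambda\beta_i}\lesssim \beta_i S(g_i)/\log\lambda$ --- cannot produce the factor $n^{-1/2}$ in the second term of the theorem. An $L^1$ bound on the heavy contribution gives at best something like $\alpha\beta\gamma\,S(g_i)/\log\lambda$, and no optimisation in $\lambda$ against the bulk Bernstein term recovers a $n^{-1/2}$. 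Without that $n^{-1/2}$, the second term is useless for the application in Section~6. (Also, your step~(1) invokes~\eqref{ellisgreen} and representation dimensions, but none of that is relevant here: the identity $\langle f*g_1,g_2\rangle-\alpha\beta\gamma=\langle f*(g_1-\beta),g_2-\gamma\rangle$ is immediate algebra on $\Omega$.)

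The paper's route is genuinely different from a single threshold. It dyadically decomposes each $g_i-\tint g_i$ into pieces $g_i^s$ supported where $g_i-\tint g_i$ has sign $\operatorname{sgn}(s)$ and magnitude in $(|s|/2,|s|]$, and applies Lemma~\ref{lem:rearrangement-concentration} to each pair $(s,t)$. That lemma separates a Gaussian regime $\delta_1^s\delta_2^t\gtrsim n^{-1}$ (giving the first RHS term after Cauchy--Schwarz over the dyadic scales) from a Poisson regime $\delta_1^s\delta_2^t\lesssim n^{-1}$. In the Poisson regime the one-sidedness is used: if $s,t$ have the same sign the lower-bound contribution is trivially $\lesssim\alpha|st|\delta_1^s\delta_2^t$; if they have opposite sign one takes the \emph{geometric mean} of the two upper bounds $\alpha(\log n)/n$ and $\delta_1^s\delta_2^t$ from Lemma~\ref{lem:rearrangement-concentration}, obtaining $|st|(\alpha\log n/n)^{1/2}(\delta_1^s\delta_2^t)^{1/2}$ --- this is where the $n^{-1/2}$ comes from. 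The factor $|st|(\delta_1^s\delta_2^t)^{1/2}$ is then converted to $S(g_1)^{1/2}S(g_2)^{1/2}$ via two Pinsker-type level-set estimates (Lemmas~\ref{lem:entropy-low} and~\ref{lem:entropy-high}), whose asymmetry ($\delta s^2/\beta^2$ for the low tail versus $\delta t^2/\gamma$ for the high tail) is exactly what yields $\beta^{1/2}+\gamma^{1/2}$. Summing over $O((\log n)^2)$ dyadic pairs gives the $(\log n)^{5/2}$. The geometric-mean step and the level-set entropy lemmas are the ideas your sketch is missing.
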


\begin{lemma}\label{lem:rearrangement-concentration}
Let $h_1,h_2:\Omega\to[0,1]$ be functions such that $h_i$ is supported on a set $H_i$ of density $\delta_i$, and such that $1/2\leq h_i\leq 1$ on $H_i$. Let $f:S_n\to[0,1]$ be a function with $\tint f = \alpha$. Then if $\delta_1\delta_2\gtrsim n^{-1}$ we have
\[
  |\langle f*h_1,h_2\rangle - \alpha \tint h_1 \tint h_2| \lesssim \frac{\alpha \delta_1^{1/2} \delta_2^{1/2} \log n}{n^{1/2}} + O(n^{-100}),
\]
while if $\delta_1\delta_2 \lesssim n^{-1}$ we have
\[
  - \alpha\delta_1\delta_2 \lesssim \(\langle f\ast h_1,h_2\rangle - \alpha \tint h_1 \tint h_2\)  \lesssim \min\(\frac{\alpha\log n}{n} + O(n^{-100}),\delta_1\delta_2\).
\]
\end{lemma}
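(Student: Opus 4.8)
The plan is to realise $\langle f*h_1,h_2\rangle$ as an average over a uniform random $\pi\in S_n$ of a Hoeffding-type statistic and then to apply the Bernstein inequality of Theorem~\ref{bernstein}. Writing $\pi*h_1$ for the function $\omega\mapsto h_1(\pi^{-1}(\omega))$ on $\Omega$, we have $f*h_1=\int_{S_n}f(\pi)\,(\pi*h_1)\,d\pi$, and hence
\[
  \langle f*h_1,h_2\rangle=\int_{S_n}f(\pi)\,\langle\pi*h_1,h_2\rangle\,d\pi,\qquad \langle\pi*h_1,h_2\rangle=\frac1n\sum_{i\in\Omega}h_1(i)\,h_2(\pi(i)).
\]
Setting $a_{ij}=\tfrac1n\bigl(h_1(i)h_2(j)-\tint h_1\tint h_2\bigr)$ and $X(\pi)=\sum_i a_{i\pi(i)}=\langle\pi*h_1,h_2\rangle-\tint h_1\tint h_2$, this gives
\[
  \langle f*h_1,h_2\rangle-\alpha\tint h_1\tint h_2=\int_{S_n}f(\pi)\,X(\pi)\,d\pi.
\]

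I would then verify the hypotheses of Theorem~\ref{bernstein} for $(a_{ij})$: plainly $\sum_{i,j}a_{ij}=0$; since $h_1(i)h_2(j)$ and $\tint h_1\tint h_2$ lie in $[0,1]$ we may take $M=1/n$; and, expanding the square and using $\|h_i\|_2^2=\int h_i^2\leq\int h_i\leq\delta_i$, one finds
\[
  v:=\frac1n\sum_{i,j}|a_{ij}|^2=\frac1n\Bigl(\|h_1\|_2^2\|h_2\|_2^2-\bigl(\tint h_1\tint h_2\bigr)^2\Bigr)\leq\frac{\delta_1\delta_2}{n}.
\]
Theorem~\ref{bernstein} then gives $\P(|X|>t)\leq 2\exp\bigl(-cnt^2/(\delta_1\delta_2+t)\bigr)$ for all $t>0$, while trivially $|X(\pi)|\leq\sum_i|a_{i\pi(i)}|\leq nM\leq 1$. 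Consequently, for any threshold $s>0$, using $0\leq f\leq 1$ and $\tint f=\alpha$,
\[
  \bigl|\langle f*h_1,h_2\rangle-\alpha\tint h_1\tint h_2\bigr|\leq\int_{S_n}f|X|\leq\alpha s+\P(|X|>s).
\]

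It then remains to pick $s$. If $\delta_1\delta_2\gtrsim n^{-1}$, I would take $s=C(\delta_1\delta_2)^{1/2}(\log n)/n^{1/2}$ for a suitably large constant $C$: when $s\leq\delta_1\delta_2$ one has $cns^2/(\delta_1\delta_2+s)\geq\tfrac12 cC^2(\log n)^2$, and when $s>\delta_1\delta_2$ one has $cns^2/(\delta_1\delta_2+s)\geq\tfrac12 cns=\tfrac12 cC(n\delta_1\delta_2)^{1/2}\log n\gtrsim\log n$ (using $n\delta_1\delta_2\gtrsim 1$), so either way $\P(|X|>s)=O(n^{-100})$ and the first estimate follows. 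If instead $\delta_1\delta_2\lesssim n^{-1}$, I would take $s=C(\log n)/n$, which exceeds $\delta_1\delta_2$ once $n$ is large, so that $cns^2/(\delta_1\delta_2+s)\geq\tfrac12 cns=\tfrac12 cC\log n$ and again $\P(|X|>s)=O(n^{-100})$, giving $\langle f*h_1,h_2\rangle-\alpha\tint h_1\tint h_2\lesssim\alpha(\log n)/n+O(n^{-100})$. The two remaining estimates in the small regime follow from positivity alone: $\langle\pi*h_1,h_2\rangle\geq 0$ gives $\langle f*h_1,h_2\rangle-\alpha\tint h_1\tint h_2\geq-\alpha\tint h_1\tint h_2\geq-\alpha\delta_1\delta_2$, while $f\leq 1$ together with $\int_{S_n}\langle\pi*h_1,h_2\rangle\,d\pi=\tint h_1\tint h_2$ gives $\langle f*h_1,h_2\rangle\leq\tint h_1\tint h_2\leq\delta_1\delta_2$, and combining the latter with the Bernstein estimate yields the claimed minimum.

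I expect the only real work to be the bookkeeping around the choice of $s$: one must note that the Bernstein tail for $|X|$ is essentially Gaussian (with variance proxy $\sim\delta_1\delta_2/n$) for deviations up to $\delta_1\delta_2$ and exponential beyond, and that in the subregime $\delta_1\delta_2\gtrsim(\log n)^2/n$ of the first case one genuinely needs the Gaussian part, which forces the full power of $\log n$ rather than $(\log n)^{1/2}$; everything else---the opening identity, the values of $M$ and $v$, and the positivity bounds---is routine.
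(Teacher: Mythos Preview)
Your argument is correct and is essentially the same as the paper's: both realise the centred inner product as a Hoeffding statistic, apply Theorem~\ref{bernstein}, split off a threshold event, and finish with the same two choices of threshold together with the trivial positivity bounds. The only cosmetic difference is that the paper applies Bernstein to $a_{ij}=(h_1(i)-\tint h_1)(h_2(j)-\tint h_2)$ whereas you use $a_{ij}=\tfrac1n(h_1(i)h_2(j)-\tint h_1\tint h_2)$; since $\langle\pi*(h_1-\tint h_1),h_2-\tint h_2\rangle=\langle\pi*h_1,h_2\rangle-\tint h_1\tint h_2$, these are the same statistic up to the factor of $n$, and your thresholds $s$ correspond exactly to the paper's $t=ns$.
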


To explain the two cases appearing in Lemma~\ref{lem:rearrangement-concentration}, let us momentarily think of $h_i$ as the indicator of $H_i$. The inner product $\langle f*h_1,h_2\rangle/\alpha$ is then the density of a random intersection $\pi(H_1)\cap H_2$, where $\pi$ is chosen randomly according to $f/\alpha$. If $H_1$ and $H_2$ are not too small then we expect $|\pi(H_1)\cap H_2|$ to be highly concentrated around $\delta_1\delta_2 n$ with a Gaussian-type tail: this is the first case in the lemma. However if $H_1$ and $H_2$ are small then $|\pi(H_1)\cap H_2|$ has a Poisson-type distribution, so we expect $\pi(H_1)\cap H_2$ to be nonempty with probability about $\delta_1\delta_2$, and in any case almost surely bounded in size by about $\log n$: this is the second case in the lemma. The lower bound in the second case is trivial.

\begin{proof}
Apply Theorem~\ref{bernstein} to $a_{ij} = (h_1(i)-\tint h_1)(h_2(j) - \tint h_2)$, noting that $|a_{ij}|\leq 1$ and
\[
  \frac1n \sum_{i,j=1}^n a_{ij}^2 = \frac1n \sum_{i=1}^n (h_1(i)-\tint h_1)^2 \sum_{j=1}^n (h_2(j)-\tint h_2)^2 \lesssim \delta_1\delta_2 n.
\]
The result is that
\[
  \P(n|\langle \pi\ast(h_1-\tint h_1),(h_2-\tint h_2)\rangle| > t) \leq 2 \exp\( \frac{-ct^2}{\delta_1\delta_2n + t} \).
\]
Thus for every $t>0$ we have
\[
 | \langle f*(h_1-\tint h_1),(h_2-\tint h_2)\rangle| \lesssim \frac{\alpha t}{n} + 2 \exp\( \frac{-ct^2}{\delta_1\delta_2n + t} \).
\]

For the first part of the lemma put $t = C\delta_1^{1/2}\delta_2^{1/2}n^{1/2}\log n$ for some constant $C$. Then we obtain
\[
  | \langle f*(h_1-\tint h_1),(h_2-\tint h_2)\rangle| \lesssim_C \frac{\alpha \delta_1^{1/2} \delta_2^{1/2} \log n}{n^{1/2}} + 2 \exp\( \frac{-c C^2 (\log n)^2}{1 + (\delta_1\delta_2n)^{-1/2} C \log n} \).
\]
If $\delta_1\delta_2 \gtrsim n^{-1}$ and $C$ is sufficiently large it follows that
\[
  | \langle f*(h_1-\tint h_1),(h_2-\tint h_2)\rangle| \lesssim \frac{\alpha\delta_1^{1/2}\delta_2^{1/2} \log n}{n^{1/2}} + O(n^{-100}),
\]
as claimed.

For the second part of the lemma put $t = C \log n$ for some constant $C$. Then we obtain
\[
  | \langle f*(h_1-\tint h_1),(h_2-\tint h_2)\rangle| \lesssim_C \frac{\alpha \log n}{n} + 2 \exp\( \frac{-c C^2 (\log n)^2}{\delta_1\delta_2 n + C \log n} \).
\]
Now if $\delta_1\delta_2\lesssim n^{-1}$ and $C$ is sufficiently large it follows that
\[
  | \langle f*(h_1-\tint h_1),(h_2-\tint h_2)\rangle| \lesssim \frac{\alpha \log n}{n} + O(n^{-100}).
\]

The remaining inequalities asserted by the lemma are trivial: just note that
\[
  \langle f*h_1,h_2\rangle \leq \langle 1*h_1,h_2\rangle = \tint h_1\tint h_2 \lesssim \delta_1\delta_2,
\]
and
\[
  \alpha\tint h_1\tint h_2 \lesssim \alpha\delta_1\delta_2.\qedhere
\]
\end{proof}

We will deduce Theorem~\ref{thm:rearrangement-concentration} from Lemma~\ref{lem:rearrangement-concentration} using a dyadic decomposition, but first we need two basic entropy computations.

\begin{lemma}\label{lem:entropy-low}
Let $g:\Omega\to[0,1]$ be a function such that $\tint g = \beta$ and such that $g\leq \beta - t$ on a set of density at least $\delta$, where $t,\delta>0$. Then
\[
  S(g) \gtrsim \frac{\delta t^2}{\beta^2}.
\]
\end{lemma}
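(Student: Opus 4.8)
The plan is to lower-bound the entropy $S(g) = \int_\Omega (g/\beta)\log(g/\beta)$ by isolating the contribution of the set $E$ on which $g \le \beta - t$. Write $p = g/\beta$, so that $\int p = 1$ and I want to show $\int p\log p \gtrsim \delta t^2/\beta^2$. The natural tool is the standard convexity inequality $x\log x \ge (x-1) + \tfrac12(x-1)^2 \cdot \phi(x)$ for a suitable correction, but cleaner is to use the Pinsker-type pointwise bound: for $x \ge 0$, $x\log x - x + 1 \ge \tfrac12 (x-1)^2 / \max(x,1)$, or even more simply the bound valid for $0 \le x \le 1$ that $x\log x - x + 1 \ge \tfrac12 (1-x)^2$. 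Since on $E$ we have $g \le \beta - t \le \beta$, i.e. $p \le 1$, this last bound applies directly on $E$.

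First I would recall that $\int_\Omega (p - 1) = 0$, so that $S(g) = \int_\Omega (p\log p - p + 1)$, and since $x\log x - x + 1 \ge 0$ for all $x \ge 0$ the integrand is nonnegative everywhere; hence I may discard the complement of $E$ and write $S(g) \ge \int_E (p\log p - p + 1)$. Next, on $E$ we have $0 \le p = g/\beta \le (\beta - t)/\beta = 1 - t/\beta$, so applying the elementary inequality $x\log x - x + 1 \ge \tfrac12(1-x)^2$ for $x\in[0,1]$ pointwise on $E$ gives $p\log p - p + 1 \ge \tfrac12(1-p)^2 \ge \tfrac12 (t/\beta)^2$ there. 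Integrating over $E$, whose measure is at least $\delta$, yields $S(g) \ge \tfrac12 \delta t^2/\beta^2$, which is the claim.

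The only point requiring any care is verifying the elementary inequality $x\log x - x + 1 \ge \tfrac12(1-x)^2$ for $x\in[0,1]$; this is routine (the difference vanishes to second order at $x=1$ and one checks the sign of the derivative, or notes it follows from $\log x \le x - 1$ applied appropriately), so there is no real obstacle here — the lemma is essentially a one-line consequence of nonnegativity of relative entropy together with a pointwise convexity estimate. If one prefers to avoid even this, one can instead invoke the standard bound $S(g) \gtrsim \|g/\beta - 1\|_{L^1}^2$ (Pinsker) and observe that $\|g/\beta - 1\|_{L^1} \ge \int_E (1 - g/\beta) \ge \delta t/\beta$, giving $S(g) \gtrsim \delta^2 t^2/\beta^2$; but this is weaker in the $\delta$ dependence, so the direct $L^2$-on-$E$ argument above is the one to use.
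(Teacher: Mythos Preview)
Your proof is correct, and in fact cleaner than the paper's. The paper argues by first reducing $t$ so that $t/\beta \le 1/100$, then using convexity of $x\log x$ to reduce to the extremal configuration where $g = \beta - t$ on a set of density exactly $\delta$ and $g = \beta + \tfrac{\delta}{1-\delta}t$ elsewhere, and finally Taylor-expanding $(1+x)\log(1+x)$ to extract the leading $\delta t^2/\beta^2$ term and absorb the cubic error. Your argument bypasses both the extremal reduction and the Taylor expansion: you rewrite $S(g) = \int (p\log p - p + 1)$ with $p = g/\beta$, use nonnegativity of the integrand to restrict to $E$, and apply the exact pointwise inequality $x\log x - x + 1 \ge \tfrac12(1-x)^2$ on $[0,1]$. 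This yields the explicit bound $S(g) \ge \tfrac12 \delta t^2/\beta^2$ with no smallness assumption on $t/\beta$ and no error terms to manage. The paper's approach is more uniform with the companion Lemma~\ref{lem:entropy-high} (where the case $t/\beta$ large genuinely behaves differently), but for the present lemma your route is strictly simpler. Your side remark about Pinsker is also accurate: it loses a factor of $\delta$, which would not suffice downstream.
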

\begin{proof}
We must have $t\leq \beta$, so by replacing $t$ with $t/100$ if necessary we may assume that $t/\beta \leq 1/100$. Similarly, by reducing $\delta$ if necessary we may assume that $\delta\leq 1/2$ and that $\delta n$ is an integer. Now by convexity $S(g)$ is minimized under the stated conditions when $g = \beta-t$ on a set of density $\delta$ and otherwise equal to $\beta + \frac{\delta}{1-\delta} t$, and in this case
\[
  S(g) = \delta \(1-\frac{t}{\beta}\) \log\(1 - \frac{t}{\beta}\) + (1-\delta) \(1 + \frac{\delta}{1-\delta} \frac{t}{\beta}\) \log\(1 + \frac{\delta}{1-\delta}\frac{t}{\beta}\).
\]
By inserting the Taylor expansion
\begin{equation}\label{taylor}
  (1+x)\log(1+x) = x + x^2/2 + O(x^3)
\end{equation}
we thus have
\[
  S(g) \geq \frac12 \frac{\delta}{1-\delta} \frac{t^2}{\beta^2} + O\(\delta \frac{t^3}{\beta^3}\) \gtrsim \frac{\delta t^2}{\beta^2}.
\]
The last inequality follows from our assumption $t/\beta\leq 1/100$.
\end{proof}

\begin{lemma}\label{lem:entropy-high}
Let $g:\Omega\to[0,1]$ be a function such that $\tint g = \beta$ and such that $g \geq \beta + t$ on a set of density at least $\delta$, where $t,\delta>0$. Then
\[
  S(g) \gtrsim \min\(\frac{\delta t}{\beta}, \frac{\delta t^2}{\beta^2}\) \geq \frac{\delta t^2}{\beta}.
\]
\end{lemma}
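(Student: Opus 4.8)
The plan is to imitate the proof of Lemma~\ref{lem:entropy-low}: reduce by convexity to an extremal two-point configuration, then estimate the resulting function of one variable, splitting according to whether $t$ is small or large compared with $\beta$.

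First I would set up the reduction. Write $A=\{g\geq\beta+t\}$, a set of density $\delta'\geq\delta$; exactly as in Lemma~\ref{lem:entropy-low} we may shrink $\delta$ so that $\delta n$ is an integer, which is harmless. Integrating the bound $g\geq\beta+t$ over $A$ against the total mass $\tint g=\beta$ (and using $g\geq 0$) shows $\delta'(\beta+t)\leq\beta$, i.e. $\delta'\leq\beta/(\beta+t)<1$; this constraint will be needed below. Applying Jensen's inequality to the convex function $\phi(x)=x\log x$ separately over $A$ and over $A^{c}$ gives $S(g)\geq\delta'\phi(a/\beta)+(1-\delta')\phi(b/\beta)$, where $a$ and $b$ are the averages of $g$ over $A$ and $A^{c}$; these satisfy $a\geq\beta+t$, $b\geq0$, and $\delta'a+(1-\delta')b=\beta$.

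Next I would locate the extremal configuration. Fix $\delta'$ and regard the right-hand side as a function of $a$ alone, with $b=(\beta-\delta'a)/(1-\delta')$ determined and $a$ ranging over the interval $[\beta+t,\beta/\delta']$, which is nonempty precisely by the inequality $\delta'\leq\beta/(\beta+t)$ above. Its derivative in $a$ is a positive multiple of $\phi'(a/\beta)-\phi'(b/\beta)$, which is positive because $a>\beta>b$ and $\phi'=1+\log(\cdot)$ is increasing; hence the minimum is attained at $a=\beta+t$, where $b=\beta-\delta't/(1-\delta')$. Writing $s=t/\beta$ and $r=\delta's/(1-\delta')\in[0,1]$, this yields $S(g)\geq\delta'(1+s)\log(1+s)+(1-\delta')(1-r)\log(1-r)$. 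I would then discard the second term using $(1-r)\log(1-r)\geq-r$ (which follows from $\log u\geq 1-1/u$), leaving $S(g)\geq\delta'\bigl((1+s)\log(1+s)-s\bigr)$. Since $\min(xs,xs^{2})$ is increasing in $x$ and $\delta'\geq\delta$, there is no need to analyse the dependence on $\delta'$ any further.

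Finally I would invoke the elementary inequality $(1+s)\log(1+s)-s\gtrsim\min(s,s^{2})$ for $s>0$: for $s\leq1$ this comes from the Taylor expansion (the quantity equals $s^{2}/2+O(s^{3})$ and is bounded below by a positive multiple of $s^{2}$ throughout $[0,1]$), and for $s\geq1$ it follows from convexity and monotonicity, the function being positive at $s=1$ with derivative $\log(1+s)\geq\log 2$, hence growing at least linearly. This gives $S(g)\gtrsim\delta'\min(s,s^{2})\geq\min(\delta t/\beta,\delta t^{2}/\beta^{2})$, and the last inequality in the statement, $\min(\delta t/\beta,\delta t^{2}/\beta^{2})\geq\delta t^{2}/\beta$, is immediate from $\beta\leq1$ and $t\leq1-\beta\leq1$. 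The only mildly delicate point is the monotonicity-in-$a$ step pinning down the extremal configuration; one has to be a little careful because, unlike in Lemma~\ref{lem:entropy-low}, the complementary value $b$ may be forced close to $0$. But this is the same mechanism as in the low case, and everything else is routine.
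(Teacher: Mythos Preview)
Your proof is correct and follows essentially the same route as the paper's: reduce by convexity to the two-point extremal configuration $g=\beta+t$ on a set of density $\delta$ and $g=\beta-\frac{\delta}{1-\delta}t$ elsewhere, then split according to whether $s=t/\beta$ is at most or at least $1$. Your use of the exact inequality $(1-r)\log(1-r)\geq -r$ to discard the second term is a mild simplification over the paper's Taylor expansion (which forces a preliminary shrinking of $t$ to control the cubic error), but otherwise the arguments coincide.
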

\begin{proof}
We must have $(\beta+t)\delta \leq \tint g = \beta$, i.e.,
\[
  \frac{\delta}{1-\delta} \frac{t}{\beta}\leq 1,
\]
so by replacing $t$ with $t/100$ if necessary we may assume that
\[
  \frac{\delta}{1-\delta} \frac{t}{\beta}\leq \frac1{100}.
\]
As before we may also assume that $\delta\leq 1/2$ and that $\delta n$ is an integer. Now by convexity $S(g)$ is minimized under the stated conditions when $g = \beta+t$ on a set of density $\delta$ and otherwise equal to $\beta - \frac{\delta}{1-\delta} t$, and in this case
\[
  S(g) = \delta \(1+\frac{t}{\beta}\) \log\(1 + \frac{t}{\beta}\) + (1-\delta) \(1 - \frac{\delta}{1-\delta} \frac{t}{\beta}\) \log\(1 - \frac{\delta}{1-\delta}\frac{t}{\beta}\).
\]
By inserting~\eqref{taylor} we thus have
\[
  S(g) \geq \delta \(1+\frac{t}{\beta}\) \log\(1 + \frac{t}{\beta}\) - \delta \frac{t}{\beta} + \frac12 \frac{\delta^2}{(1-\delta)} \frac{t^2}{\beta^2} + O\(\frac{\delta^3t^3}{\beta^3}\).
\]

Now we separate into cases depending on the size of $t/\beta$. If $t/\beta \geq 1$ then we have
\[
  S(g) \geq \delta \frac{t}{\beta} (2\log 2-1) + O\(\frac{\delta^2t^2}{\beta^2}\) \gtrsim \frac{\delta t}{\beta}.
\]
On the other hand if $t/\beta \leq 1$ then by reducing $t$ if necessary we may assume that $t/\beta \leq 1/100$, and then by inserting~\eqref{taylor} again we have
\[
  S(g) \geq \frac12 \frac{\delta}{1-\delta} \frac{t^2}{\beta^2} + O\(\frac{\delta t^3}{\beta^3}\) \gtrsim \frac{\delta t^2}{\beta^2}.
\]
As before we used our assumption about the size of $\delta t/\beta$ or $t/\beta$ to justify the absorption of the error terms.
\end{proof}

\begin{proof}[Proof of Theorem~\ref{thm:rearrangement-concentration}]
Write
\[
  g_i - \tint g_i = \sum_s g_i^s + O(n^{-100}) = \sum_s (g_i^s - \tint g_i^s) + O(n^{-100}),
\]
where $s$ ranges over all $s$ of the form $\pm 2^{-k}$ for which $n^{-100}\leq |s|\leq 1$, and where $g_i^s$ is defined to be equal to $g_i-\tint g_i$ where $g_i-\tint g_i$ has the same sign as $s$ and $|s|/2 < |g_i - \tint g_i| \leq |s|$ and zero elsewhere. Then 
\[
  \langle f*g_1, g_2\rangle - \alpha\beta\gamma = \sum_{s,t} \(\langle f*g_1^s,g_2^t\rangle - \alpha \tint g_1^s \tint g_2^t \) + O(n^{-100}).
\]

For each $s,t$ we apply Lemma~\ref{lem:rearrangement-concentration} with $h_1 = g_1^s/s$ and $h_t=g_2^t/t$. Let $\delta_1^s$ be the density of points where $g_1-\tint g_1$ has the same sign as $s$ and $|s|/2 < |g_1 - \tint g_1| \leq |s|$ and let $\delta_2^t$ be the density of points where $g_2-\tint g_2$ has the same sign as $t$ and $|t|/2 < |g_2-\tint g_2|\leq |t|$. If $\delta_1^s \delta_2^t \gtrsim 1/n$ then we get the bound
\[
 \left|\langle f*g_1^s, g_2^t\rangle - \alpha \tint g_1^s \tint g_2^t\right| \lesssim \frac{\alpha |s| |t| (\delta_1^s)^{1/2} (\delta_2^t)^{1/2} \log n}{n^{1/2}} + O(n^{-100}),
\]
and the total contribution from all such cases is bounded by
\begin{align*}
  &\sum_{s,t} \(\frac{ \alpha |s| |t| (\delta_1^s)^{1/2} (\delta_2^t)^{1/2} \log n}{n^{1/2}} + O(n^{-100})\)\\
  &\qquad\lesssim \frac{\alpha \|g_1-\tint g_1\|_2 \|g_2-\tint g_2\|_2 \log n}{n^{1/2}} + O(n^{-99}).
\end{align*}

Now consider the cases in which $\delta_1^s\delta_2^t \lesssim 1/n$ and in which $s$ and $t$ have the same sign. By Lemma~\ref{lem:rearrangement-concentration} we have
\[
  -\(\langle f\ast g_1^s,g_2^t\rangle - \alpha g_1^s g_2^t\) \lesssim \alpha |s| |t| \delta_1^s \delta_2^t \lesssim \frac{\alpha |s| |t| (\delta_1^s)^{1/2} (\delta_2^t)^{1/2}}{n^{1/2}},
\]
so the total contribution from these cases is again acceptable.

Finally consider the cases in which $\delta_1^s\delta_2^t\lesssim 1/n$ and in which $s$ and $t$ have opposite sign, say $s<0$ and $t>0$. By Lemmas~\ref{lem:entropy-low} and~\ref{lem:entropy-high} we have
\[
  S(g_1) \gtrsim \frac{\delta_1^s s^2}{\beta^2}
\]
and
\[
  S(g_2) \gtrsim \frac{\delta_2^t t^2}{\gamma},
\]
so
\[
  S(g_1)^{1/2} S(g_2)^{1/2} \gtrsim \frac{|s| |t| (\delta_1^s \delta_2^t)^{1/2}}{\beta\gamma^{1/2}}.
\]
Thus by Lemma~\ref{lem:rearrangement-concentration} we can bound
\begin{align*}
  |\langle f*g_1^s,g_2^t\rangle - \alpha \tint g_1^s \tint g_2^t |
  &\lesssim |s| |t| \(\frac{\alpha \log n}{n}\)^{1/2} \(\delta_1^s \delta_2^t\)^{1/2} + O(n^{-100})\\
  &\lesssim \frac{\alpha^{1/2} \beta \gamma^{1/2} S(g_1)^{1/2} S(g_2)^{1/2} (\log n)^{1/2}}{n^{1/2}} + O(n^{-100}).
\end{align*}
If $s<0$ and $t>0$ then we get the analogous bound
\[
  |\langle f*g_1^s,g_2^t\rangle - \alpha \tint g_1^s \tint g_2^t |
  \lesssim \frac{\alpha^{1/2} \beta^{1/2} \gamma S(g_1)^{1/2} S(g_2)^{1/2} (\log n)^{1/2}}{n^{1/2}} + O(n^{-100}).
\]
The number of choices of $s$ and $t$ is bounded by $(\log n)^2$, so the total contribution from all these cases is bounded by
\[
  \frac{\alpha^{1/2}\beta^{1/2}\gamma^{1/2}(\beta^{1/2} + \gamma^{1/2})S(g_1)^{1/2} S(g_2)^{1/2} (\log n)^{5/2}}{n^{1/2}} + O(n^{-99}).\qedhere
\]
\end{proof}

\section{Bounding the second term in~\eqref{ellisgreen}}

\begin{proof}[Proof of Theorem~\ref{main}]
Let $f=1_X$, $g=1_Y$, and $h=1_Z$, where $X,Y,Z\subset A_n$ have densities $\alpha,\beta,\gamma \geq n^{-O(1)}$ respectively. Then the first term in~\eqref{ellisgreen} is
\[
	\alpha\beta\gamma,
\]
the third term is bounded by
\[
	c\alpha^{1/2}\beta^{1/2}\gamma^{1/2}/n,
\]
and the second term is, by~\eqref{secondterm} and Theorem~\ref{thm:rearrangement-concentration},
\begin{align*}
	(n-1)&\langle\hat{f}(\sigma)\hat{g}(\sigma),\hat{h}(\sigma)\rangle_\HS\\
	&\sim \sum_{i\in\Omega} \langle f*(p_ig-\beta),(p_ih-\gamma)\rangle\\
	&\gtrsim - \frac{\alpha \log n}{n^{1/2}} \sum_{i\in\Omega} \|p_i g - \beta \|_2 \|p_i h - \gamma\|_2 \\
	&\qquad - \frac{\alpha^{1/2}\beta^{1/2}\gamma^{1/2}(\beta^{1/2}+\gamma^{1/2})(\log n)^{5/2}}{n^{1/2}} \sum_{i\in\Omega} S(p_i g)^{1/2} S(p_i h)^{1/2}\\
	&\qquad + O(n^{-98}).
\end{align*}
By Cauchy--Schwarz and the Parseval remnant~\eqref{remnantparseval}, the first term here is bounded in magnitude by
\[
	\frac{\alpha\log n}{n^{1/2}} \(\sum_{i\in\Omega} \|p_ig - \beta\|_2^2\)^{1/2} \(\sum_{i\in\Omega} \|p_ih - \gamma\|_2^2\)^{1/2} \lesssim \frac{\alpha \beta^{1/2} \gamma^{1/2} \log n}{n^{1/2}}.
\]
Similarly, by Cauchy--Schwarz and subadditivity of entropy (Theorem~\ref{subadditivity}) the second term is bounded in magnitude by
\begin{align*}
  &\frac{\alpha^{1/2}\beta^{1/2}\gamma^{1/2}(\beta^{1/2}+\gamma^{1/2})(\log n)^{5/2}}{n^{1/2}} \(\sum_{i\in\Omega} S(p_i g)\)^{1/2} \(\sum_{i\in\Omega} S(p_i h)\)^{1/2}\\
  &\qquad \lesssim \frac{\alpha^{1/2}\beta^{1/2}\gamma^{1/2}(\beta^{1/2}+\gamma^{1/2})(\log n)^{5/2}}{n^{1/2}} (\log\beta^{-1})^{1/2} (\log\gamma^{-1})^{1/2}\\
  &\qquad \lesssim \frac{\alpha^{1/2}\beta^{1/2}\gamma^{1/2}(\beta^{1/2}+\gamma^{1/2})(\log n)^{7/2}}{n^{1/2}}.
\end{align*}
Thus we deduce that $\langle f*g,h\rangle \geq (1+o(1))\alpha\beta\gamma$ provided that
\begin{align*}
  \frac{\alpha^{1/2}\beta^{1/2}\gamma^{1/2}}{n} &\ll \alpha\beta\gamma,\\
  \frac{\alpha\beta^{1/2}\gamma^{1/2}(\log n)}{n^{1/2}} &\ll \alpha\beta\gamma,\\
  \frac{\alpha^{1/2}\beta\gamma^{1/2}(\log n)^{7/2}}{n^{1/2}} &\ll \alpha\beta\gamma,\\
  \frac{\alpha^{1/2}\beta^{1/2}\gamma(\log n)^{7/2}}{n^{1/2}} &\ll \alpha\beta\gamma,~\text{and}\\
  n^{-98} &\ll \alpha\beta\gamma.
\end{align*}
In other words what we require is that
\[
  \min(\alpha\beta,\alpha\gamma,\beta\gamma) \gg (\log n)^7/n.\qedhere
\]
\end{proof}

\section{Open questions}

The most obvious outstanding open question is whether the logarithms can be removed from Theorem~\ref{main}. Specifically, does the largest product-free subset of $A_n$ have density $O(n^{-1/2})$? Can you say anything about the extremal examples? It is possible that all near-extremizers look roughly like the first example in Section~\ref{examples}, or its inverse, but this may be difficult to quantify, and even more difficult to prove.

Another obvious outstanding open question is whether a one-sided product-mixing phenomenon persists in other groups for densities lower than that given by Theorem~\ref{gowers}. For example take $G=\SL_2(p)$. For this group $m\sim p$. By Theorem~\ref{gowers} there is two-sided product mixing for sets of density at least $p^{-1/3}$, by Proposition~\ref{prop:many} there is no two-sided product mixing for sets of density less than $p^{-1/3}$, and by Proposition~\ref{prop:ked} there is no product mixing at all below density $p^{-1/2}$. Do we have one-sided product mixing for sets of densities between $p^{-1/2}$ and $p^{-1/3}$?

Another great question, which has been asked before by both Kedlaya~\cite{kedlayaAMM} and Gowers~\cite{gowers}, is about the product-mixing properties of $\textup{SU}(n)$. To make the question concrete, what is the measure of the largest product-free subset of $\textup{SU}(n)$? By straightforward adaptation of Theorem~\ref{gowers} it is at most $O(n^{-1/3})$, but the only lower bounds we know have the form $c^n$ for some $c<1$. Apart from being an interesting and natural question in its own right, answering this question may be relevant for understanding the product-mixing behaviour of groups not having a permutation representation of dimension $\sim m$.

\bibliography{productfreeAn}

\newcommand{\etalchar}[1]{$^{#1}$}
\begin{thebibliography}{MJC{\etalchar{+}}14}

\bibitem[BNP08]{babainikolovpyber}
L.~Babai, N.~Nikolov, and L.~Pyber.
\newblock Product growth and mixing in finite groups.
\newblock In {\em Proceedings of the {N}ineteenth {A}nnual {ACM}-{SIAM}
  {S}ymposium on {D}iscrete {A}lgorithms}, pages 248--257. ACM, New York, 2008.

\bibitem[Bol84]{bolthausen}
E.~Bolthausen.
\newblock An estimate of the remainder in a combinatorial central limit
  theorem.
\newblock {\em Z. Wahrsch. Verw. Gebiete}, 66(3):379--386, 1984.

\bibitem[CCE09]{cc}
E.~A. Carlen and D.~Cordero-Erausquin.
\newblock Subadditivity of the entropy and its relation to {B}rascamp-{L}ieb
  type inequalities.
\newblock {\em Geom. Funct. Anal.}, 19(2):373--405, 2009.

\bibitem[Cha07]{chatterjee}
S.~Chatterjee.
\newblock Stein's method for concentration inequalities.
\newblock {\em Probab. Theory Related Fields}, 138(1-2):305--321, 2007.

\bibitem[CLL04]{cll-sphere}
E.~A. Carlen, E.~H. Lieb, and M.~Loss.
\newblock A sharp analog of {Y}oung's inequality on {$S\sp N$} and related
  entropy inequalities.
\newblock {\em J. Geom. Anal.}, 14(3):487--520, 2004.

\bibitem[CLL06]{cll}
E.~A. Carlen, E.~H. Lieb, and M.~Loss.
\newblock An inequality of {H}adamard type for permanents.
\newblock {\em Methods Appl. Anal.}, 13(1):1--17, 2006.

\bibitem[Gow08]{gowers}
W.~T. Gowers.
\newblock Quasirandom groups.
\newblock {\em Combin. Probab. Comput.}, 17(3):363--387, 2008.

\bibitem[Hoe51]{hoeffding}
W.~Hoeffding.
\newblock A combinatorial central limit theorem.
\newblock {\em Ann. Math. Statistics}, 22:558--566, 1951.

\bibitem[Ked97]{kedlaya1}
K.~S. Kedlaya.
\newblock Large product-free subsets of finite groups.
\newblock {\em J. Combin. Theory Ser. A}, 77(2):339--343, 1997.

\bibitem[Ked98]{kedlayaAMM}
K.~S. Kedlaya.
\newblock Product-free subsets of groups.
\newblock {\em Amer. Math. Monthly}, 105(10):900--906, 1998.

\bibitem[MJC{\etalchar{+}}14]{mjcft}
L.~Mackey, M.~I. Jordan, R.~Y. Chen, B.~Farrell, and J.~A. Tropp.
\newblock Matrix concentration inequalities via the method of exchangeable
  pairs.
\newblock {\em Ann. Probab.}, 42(3):906--945, 2014.

\bibitem[Ras77]{rasala}
R.~Rasala.
\newblock On the minimal degrees of characters of {$S\sb{n}$}.
\newblock {\em J. Algebra}, 45(1):132--181, 1977.

\bibitem[Tao14]{tao}
T.~Tao.
\newblock {\em Hilbert's fifth problem and related topics}, volume 153 of {\em
  Graduate Studies in Mathematics}.
\newblock American Mathematical Society, Providence, RI, 2014.

\bibitem[Wig10]{wigderson}
A.~Wigderson.
\newblock Lecture notes for the 22nd {McGill} invitational workshop on
  computational complexity, 2010.
\newblock \url{http://www.math.ias.edu/~avi/TALKS/additive-lectures-v2.pdf}.

\end{thebibliography}
\bibliographystyle{alpha}

\begin{dajauthors}
\begin{authorinfo}[eberhard]
  Sean Eberhard\\
  London, UK\\
  eberhard\imagedot{}math\imageat{}gmail\imagedot{}com
\end{authorinfo}
\end{dajauthors}

\end{document}